\newtheorem{theorem}{Theorem} [section]
\newtheorem{prop}[theorem]{Proposition}
\newtheorem{question}[theorem]{Question}
\theoremstyle{definition}
\newtheorem{example}[theorem]{Example}
\newtheorem{remark}[theorem]{Remark}
\numberwithin{equation}{section}
\numberwithin{figure}{section}
\newcommand\C{{\mathbb C}}
\newcommand\Chat { {\hat{\C}} } 
\renewcommand\P{{\mathbb P}}
\newcommand\R{{\mathbb R}}
\newcommand\Q{{\mathbb Q}}
\newcommand\N{{\mathbb N}}
\newcommand\cM{\mathcal{M}}
\newcommand\cC{\mathcal{C}}
\renewcommand\phi{\varphi}
\newcommand\Gal{\operatorname{Gal}}
\newcommand\Qbar{\overline{\mathbb{Q}}}
\newcommand\Kbar{\overline{K}}
\newcommand\del{\partial} 
\newcommand\iso{\simeq} %isomorphism -~
\newcommand\codim {\operatorname{codim}} % codimension
\definecolor{myblue}{rgb}{0.6, 0.9, 1}
\definecolor{mygreen}{rgb}{0,0,1}
\definecolor{purple}{rgb}{0.6,0.2,1}
\definecolor{orange}{rgb}{0.8,0,0.2}
\begin{document}

\title[Geometry of PCF parameters]{Geometry of PCF parameters in spaces of quadratic polynomials}

%\title[Optimal general uniformity]{Optimal uniform bounds for PCF points on general curves in spaces of quadratic polynomials}

\author{Laura De Marco and Niki Myrto Mavraki}
\email{demarco@math.harvard.edu}
\email{myrto.mavraki@utoronto.ca}
%\email{yehexi@zju.edu.cn}

\date{\today}

\begin{abstract}
We study algebraic relations among postcritically finite (PCF) parameters in the family $f_c(z) = z^2 + c$.  In \cite{GKNY}, it was proved that an algebraic curve in $\C^2$ contains infinitely many PCF pairs $(c_1, c_2)$ if and only if the curve is special (i.e., the curve is a vertical or horizontal line through a PCF parameter, or the curve is the diagonal).  Here we extend this result to subvarieties of arbitrary dimension in $\C^n$ for any $n\geq 2$.  Consequently, we obtain uniform bounds on the number of PCF pairs on non-special curves in $\C^2$ and the number of PCF parameters in real algebraic curves in $\C$, depending only on the degree of the curve.  We also compute the optimal bound for the general curve of degree $d$.  For $d=1$, we prove that there are only finitely many non-special lines in $\C^2$ containing more than two PCF pairs, and similarly, that there are only finitely many (real) lines in $\C = \R^2$ containing more than two PCF parameters.
\end{abstract}

% \subjclass{(2010 Classification) Primary 37F45, Secondary 37P30, 11G05}
%\keywords{}

%\thanks{The research was supported by the National Science Foundation.}

\maketitle

\thispagestyle{empty}

%%%%%%
%%%%%%
\bigskip
\section{Introduction}

For each $c \in \C$, let $f_c(z) = z^2 + c$.  Recall that the polynomial $f_c$ is postcritically finite (PCF) if the critical point at $z_0 = 0$ has a finite forward orbit.  In this article, we study algebraic relations among the PCF parameters $c \in \C$.

\begin{figure} [h]
\includegraphics[width=3.5in]{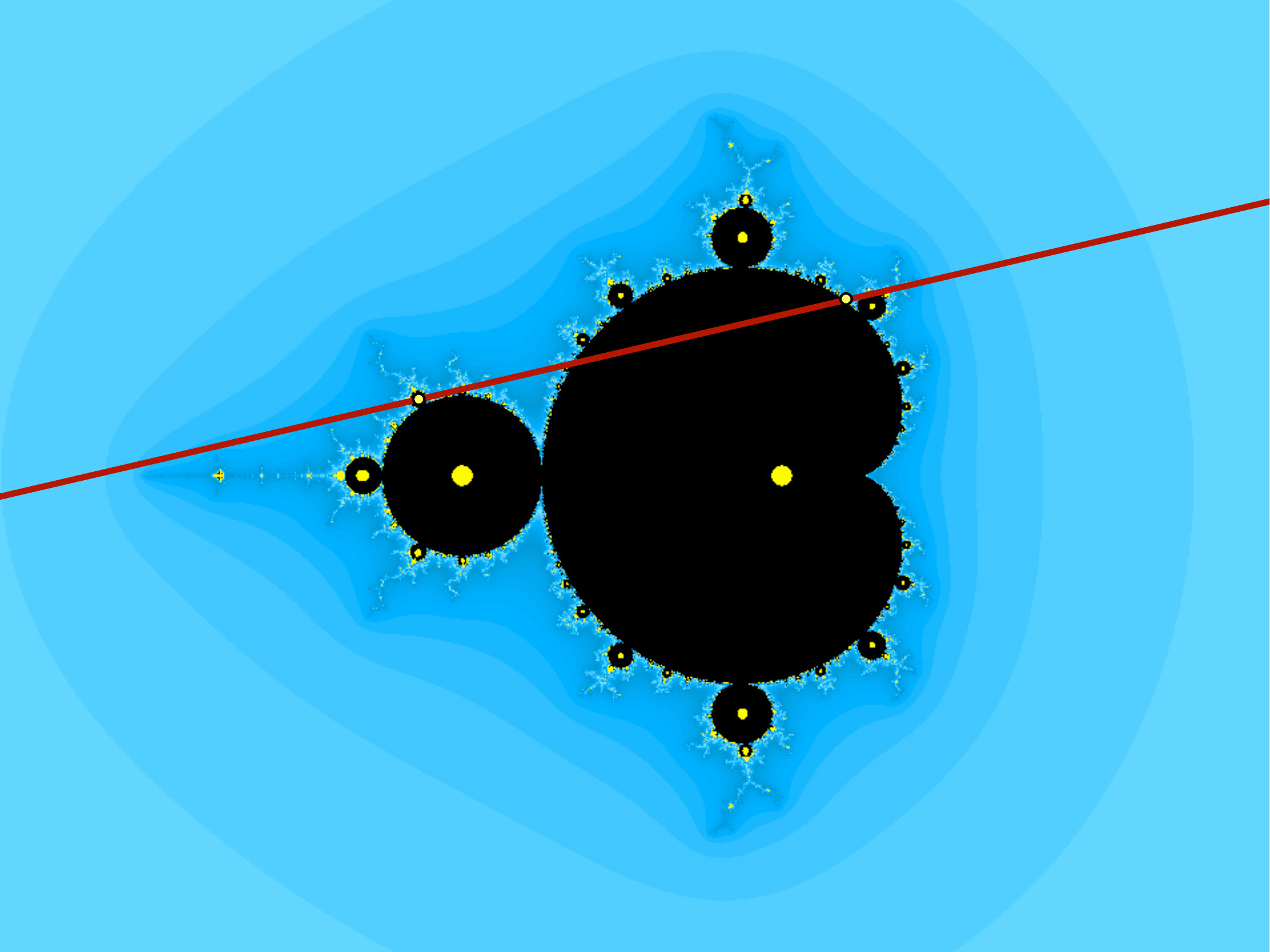}
\caption{ \small The Mandelbrot set with PCF parameters marked in yellow. There are only finitely many real lines in $\C$ containing more than two PCF parameters; see Theorem \ref{real}.}
\label{Mandelbrot line}
\end{figure}

Our starting point is the following theorem of Ghioca, Krieger, Nguyen, and Ye \cite{GKNY}.  Generalizations to algebraic curves in other polynomial families were obtained by Favre and Gauthier \cite{Favre:Gauthier:book}.

\begin{theorem} \cite{GKNY} \label{M^2}
Let $C$ be an irreducible complex algebraic curve in $\C^2$. Then $C$ contains infinitely many PCF pairs $(c_1, c_2)$ if and only if $C$ is either
\begin{enumerate}
\item a vertical line $\{x = c_1\}$ for a PCF $f_{c_1}$; or
\item a horizontal line $\{y = c_2\}$ for a PCF $f_{c_2}$; or
\item the diagonal $\{x=y\}$
\end{enumerate}
in coordinates $(x,y)$ on $\C^2$.
\end{theorem}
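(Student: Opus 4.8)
The plan is to prove the ``if'' direction (that each special curve does carry infinitely many PCF pairs) by inspection, and to attack the substantive ``only if'' direction by arithmetic equidistribution, following the strategy of \cite{GKNY} (see also \cite{Favre:Gauthier:book}). For the easy direction, note that for each $n\ge 1$ the polynomial $f_c^n(0)$ has positive degree in $c$, so there are infinitely many PCF parameters; hence a vertical line $\{x=c_1\}$ with $c_1$ PCF contains the PCF pair $(c_1,c_2)$ for every PCF $c_2$, symmetrically for horizontal lines, and the diagonal contains $(c,c)$ for every PCF $c$. For the converse, suppose $C$ carries infinitely many PCF pairs. A PCF parameter is algebraic, so $C$ contains infinitely many $\Qbar$-points, from which a standard linear-algebra/specialization argument (as in \cite{GKNY}) forces $C$ to be defined over $\Qbar$. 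If one projection $p_i\colon C\to\C$ is constant, its value $c_i$ has $\hhat(c_i)=0$ and is therefore PCF, placing us in case (1) or (2); so I may assume $p_1,p_2$ are non-constant, say of degrees $d_1,d_2\ge 1$.

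Next I would set up the equidistribution. The critical height $\hhat(c)=\lim_n 2^{-n}\log^{+}|f_c^n(0)|$ is the height attached to an adelic, semipositive, continuous metrization of $\O_{\P^1}(1)$ over $\Qbar$ whose curvature at each place $v$ is the $v$-adic bifurcation measure of $z^2+c$; at the archimedean place this curvature is $\mubif$, the equilibrium probability measure of the Mandelbrot set $M=\{c:\sup_n|f_c^n(0)|<\infty\}$, and $\hhat|_{\C}$ is a potential for $\mubif$ (a multiple of the Green's function of $\C\setminus M$ with pole at $\infty$). Pulling back along $p_1$ and $p_2$ gives metrized line bundles $\overline{L}_1,\overline{L}_2$ on a model of $\overline{C}$, with $\deg L_i=d_i$ and heights $\hhat_i:=\hhat\circ p_i\ge 0$, and a point of $C$ is a PCF pair precisely when $\hhat_1=\hhat_2=0$ there. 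Any infinite sequence of distinct PCF pairs is generic (a proper subvariety of the curve $C$ is finite) and has $\overline{L}_i$-height zero, so the essential minimum of each $\hhat_i$ is $0$ and Yuan's equidistribution theorem applies to $\overline{L}_1$ and to $\overline{L}_2$: the Galois orbits of the PCF pairs equidistribute toward $\tfrac1{d_i}\mu_{i,v}$ at every place $v$, where $\mu_{i,v}$ is the curvature of $\overline{L}_i$ at $v$. Comparing the two limits yields
\[
d_2\,\mu_{1,v}\;=\;d_1\,\mu_{2,v}\qquad\text{for every place }v,
\]
and in particular $d_2\,p_1^{*}\mubif=d_1\,p_2^{*}\mubif$ on $C(\C)$.

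The crux is then a rigidity step. Combining the equalities at \emph{all} places with the product formula, the difference $d_2\overline{L}_1-d_1\overline{L}_2$ is a degree-zero metrized line bundle that is flat at every place, so its height $d_2\hhat_1-d_1\hhat_2$ vanishes identically by a rigidity lemma for heights of flat degree-zero line bundles (as in \cite{GKNY}), since it already vanishes on the Zariski-dense set of PCF pairs. Thus $d_2\hhat_1\equiv d_1\hhat_2$ on $C(\Qbar)$ and, by continuity, $d_2\,(G_M\circ p_1)\equiv d_1\,(G_M\circ p_2)$ on $C(\C)$, with $G_M=\hhat|_\C$ up to scale. Two consequences follow. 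Comparing growth near the points of $\overline{C}$ above $\infty$ shows that $\overline{C}\subset\P^1\times\P^1$ meets the two coordinate axes at infinity only at $(\infty,\infty)$, so on the normalization each $p_i$ is totally ramified over $\infty$; and $G_M\circ p_1$ and $G_M\circ p_2$ have the same zero set, so $p_1^{-1}(M)=p_2^{-1}(M)$ (each preimage is a full compact set, and two full compacts with a common equilibrium measure share their outer boundary, hence coincide). Finally complex dynamics enters: the relation $d_2(G_M\circ p_1)=d_1(G_M\circ p_2)$ together with $p_1^{-1}(M)=p_2^{-1}(M)$ expresses a ``dynamical self-similarity'' of the Mandelbrot set of bidegree $(d_1,d_2)$; passing to the Böttcher coordinate $\Phi_M\colon\C\setminus M\xrightarrow{\ \sim\ }\C\setminus\overline{\D}$ one finds that $\Phi_M\circ p_1$ and $\Phi_M\circ p_2$ agree up to a root of unity, and since $\partial M$ has no nontrivial symmetry (equivalently $M$ is not the filled Julia set of any polynomial of degree $\ge 2$ --- for instance its main cardioid contains exactly one hyperbolic center) the root of unity is $1$ and $p_1=p_2$, i.e.\ $C$ is the diagonal.

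I expect this rigidity step to be the main obstacle, in two respects: (a) passing from equal curvatures at all places to equal canonical heights genuinely requires the arithmetic input (Manin--Mumford or a second application of equidistribution) and is invisible to archimedean potential theory alone --- the archimedean measure equality by itself leaves a harmonic ambiguity and does not yield $p_1^{-1}(M)=p_2^{-1}(M)$; and (b) the concluding complex-dynamical rigidity of the Mandelbrot set, stated carefully enough to handle $p_i$ of degree $>1$ and $\overline{C}$ of positive genus, is where the real work lies.
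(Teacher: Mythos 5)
Theorem \ref{M^2} is quoted from \cite{GKNY}; the paper does not reprove it directly, but its own machinery does yield it (in the stronger ``small points'' form) as the $n=2$ case of Theorem \ref{eM^n}: after the reduction that vertical/horizontal curves are handled trivially, Proposition \ref{dominant hyper} applies Theorem \ref{equidistribution} (Yuan--Zhang) to get only the \emph{archimedean} identity $(\pi_1|_C)^*\mu_{\cM}=\alpha\,(\pi_2|_C)^*\mu_{\cM}$ on $C$, and then Luo's inhomogeneity theorem (Theorem \ref{Luo}) is invoked: the measure identity matches supports, so away from finitely many points it induces a conformal map of a neighborhood $U$ of a point of $\del\cM$ carrying $U\cap\del\cM$ into $\del\cM$, which must be the identity, whence $C=\{x=y\}$. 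Your proposal instead reconstructs the original \cite{GKNY} route (equidistribution at all places, a rigidity statement for the flat degree-zero difference of the two metrized bundles to get $d_2\hhat_1\equiv d_1\hhat_2$, equality of Green's functions, B\"ottcher coordinates, and an asymmetry statement for $\cM$). That is a genuinely different route and is how the theorem was first proved; what the paper's route buys is the elimination of the all-places height comparison and of the global analysis of $G_M$ near infinity, at the cost of importing the (later, deep) local rigidity result of \cite{Luo:inhomogeneity}.

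Two concrete problems with the proposal as written. First, your closing claim (a) is not accurate: the archimedean measure equality alone, combined with Theorem \ref{Luo}, \emph{does} suffice, precisely because the equality of pulled-back measures already forces the local holomorphic correspondence between the two projections to preserve $\del\cM$; no equality of canonical heights, no non-archimedean places, and no global statement $p_1^{-1}(\cM)=p_2^{-1}(\cM)$ are needed for the rigidity step. Second, and more seriously for your own route, the final complex-dynamical step is asserted rather than proved. From $d_2\,(G_M\circ p_1)=d_1\,(G_M\circ p_2)$ one only gets, near infinity, a relation of the shape $(\Phi_M\circ p_1)^{d_2}=\zeta\,(\Phi_M\circ p_2)^{d_1}$; showing that this forces $d_1=d_2$, descending from the power relation to an actual identity $\Phi_M\circ p_1=\zeta\,\Phi_M\circ p_2$, and eliminating the root of unity is exactly the hard core of \cite{GKNY} (it uses, e.g., the rationality/reality of the coefficients of the uniformization of $\C\setminus\cM$ and an analysis valid for $\deg p_i>1$ and positive genus). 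Your stated justification --- that ``$\del\cM$ has no nontrivial symmetry, equivalently $\cM$ is not a filled Julia set, for instance the main cardioid contains exactly one hyperbolic center'' --- is neither a correct equivalence nor a proof of the asymmetry statement you need. If you want a complete argument, either carry out that B\"ottcher-coefficient analysis as in \cite{GKNY}, or replace the whole step by Theorem \ref{Luo} as the paper does.
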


\noindent
Note that a real algebraic curve in $\R^2$ passing through a PCF parameter $c_0$ in the Mandelbrot set (identifying $\R^2$ with $\C$) gives rise to a complex algebraic curve in $\C^2$ passing through the PCF pair $(c_0, \bar{c}_0)$.  So the above result also controls PCF points on real curves in $\C$.  See Figure \ref{Mandelbrot line} and Section \ref{real lines}.

Theorem \ref{M^2} was motivated by analogies between the PCF maps in the space of quadratic polynomials and the elliptic curves with complex multiplication (CM) in the space of $j$-invariants; see for example \cite[Ch. 6]{Silverman:moduli} \cite[Conj. 3.11]{Jones:survey}.  It was known that the only algebraic curves in $\C^2$ with infinitely many CM pairs are the modular curves (together with the infinite collection of vertical or horizontal lines through a CM point) \cite{Andre:finitude}, \cite{Edixhoven:special2}. 

Our first result is an extension of Theorem \ref{M^2} to arbitrary dimensions, exactly analogously to the classification of special subvarieties in the CM case \cite{Pila:AO}, \cite{Edixhoven:special}:

\begin{theorem} \label{M^n}
Let $n\ge 2$. Let $X$ be an irreducible complex algebraic subvariety in $\C^n$.  There is a Zariski dense set of special points in $X$ if and only if $X$ is special. 
\end{theorem}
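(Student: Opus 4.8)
\emph{Proof proposal.} Granting the description of a special subvariety of $\C^n$ as one cut out by equations of the form $x_i = x_j$ and $x_k = c_0$ with $f_{c_0}$ PCF --- equivalently, after a coordinate permutation, a product along a partition of the coordinates of diagonals, some blocks of which are pinned to a common PCF value --- the ``if'' direction is immediate: such a variety is isomorphic to an affine space $\C^s$ in whose coordinates the special points are exactly the $s$-tuples of PCF parameters, and these are Zariski dense since there are infinitely many PCF parameters and $\C^s$ is irreducible. So the content is the converse, which I would prove by induction on $n$. The base case $n=2$ is Theorem \ref{M^2} for curves, together with the trivial facts that a point carrying a special point is a special point and that $\C^2$ is (by convention) special.

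For the inductive step let $X \subseteq \C^n$ be irreducible with a Zariski dense set of special points; note $X$ is defined over $\Qbar$, being the Zariski closure of a set of algebraic points, and we may assume $X \subsetneq \C^n$. The first, soft, stage uses only the $n=2$ case. For each pair $i \ne j$ the projection $\pi_{ij} \colon \C^n \to \C^2$ onto the $i$th and $j$th coordinates carries the special points of $X$ onto a Zariski dense set of PCF pairs in $\overline{\pi_{ij}(X)}$, so by Theorem \ref{M^2} this image is special: either $\C^2$, or a line $\{x_i = c_0\}$ or $\{x_j = c_0\}$ with $f_{c_0}$ PCF, or the diagonal $\{x_i = x_j\}$. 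Recording the relations so obtained gives a subset $S$ of indices with $x_i \equiv c_i$ (some PCF $c_i$) on $X$ for $i \in S$, and an equivalence relation $\sim$ on the remaining indices with $x_i \equiv x_j$ on $X$ whenever $i \sim j$. Thus $X$ lies in the special subvariety $L$ cut out by all these relations; identifying $L \cong \C^m$ via one coordinate per surviving class and letting $X' \subseteq \C^m$ be the image of $X$, the variety $X'$ is irreducible, has Zariski dense special points, and satisfies $\overline{\pi_{ij}(X')} = \C^2$ for every $i \ne j$. If $X' = \C^m$ then $X = L$ is special; it remains to contradict $X' \subsetneq \C^m$. The cases $m \le 2$ are immediate (for $m \le 1$ the dense special set forces $X' = \C^m$; for $m=2$ the condition $\overline{\pi_{12}(X')} = \C^2$ says $X' = \C^2$). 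For $m \ge 3$, apply the inductive hypothesis to $\overline{\pi_I(X')} \subseteq \C^{|I|}$ for $I$ of size $m-1$: it is special and all of its coordinate-pair projections are onto, but the only such special subvariety of $\C^{k}$ with $k \ge 2$ is $\C^k$ itself (any relation $x_a=x_b$ or $x_a = c_0$ shrinks some pair-projection). Hence each such $\pi_I$ is dominant, so $\dim X' \ge m-1$, and we are reduced to: $X'$ is an irreducible hypersurface in $\C^m$, $m \ge 3$, with every $\pi_i \colon X' \to \C$ dominant, no coordinate pinned, and no relation $x_i \equiv x_j$.

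The remaining step is the heart of the matter, and the soft arguments no longer suffice: one must rule out such a hypersurface using arithmetic dynamics. The plan is to apply an arithmetic equidistribution theorem of Yuan (and Yuan--Zhang) type. The function $c \mapsto \hhat(c)$, the canonical height of the critical value of $f_c$, is the height attached to an adelic continuously metrized ample line bundle $\Lbar$ on $\P^1$, and $\widehat{\cL} := \sum_{i=1}^m \pi_i^*\Lbar$ restricts to $X'$ so that the special points of $X'$ are exactly its points of height zero. A Zariski dense sequence of special points is then generic and of height tending to zero, so --- provided $\widehat{\cL}|_{X'}$ satisfies the requisite non-degeneracy (equivalently, bigness/positivity) hypothesis, namely that $c_1(\widehat{\cL}|_{X'})^{\wedge(m-1)}$ does not vanish --- its Galois orbits equidistribute, at every place $v$, toward the measure proportional to $\bigl(\sum_i \pi_i^*\, c_1(\Lbar)\bigr)^{\wedge(m-1)}$ on $X'$, the archimedean factor on each $\C$ being the bifurcation measure $\mubif = dd^c\hhat$. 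One then compares this limiting measure with the rigidity statement that, in the family $z^2 + c$ (which carries no ``flip'' symmetry), two coordinate projections pull the bifurcation current $\Tbif$ back to the same current on $X'$ only when the corresponding coordinates coincide there; this forces a relation $x_i \equiv x_j$ on $X'$, contradicting the cleanup. I expect two genuine obstacles: first, verifying the positivity hypothesis needed to invoke Yuan's theorem --- its failure locus consists precisely of the ``degenerate'' subvarieties contained in a fibre of some $\pi_i$, so the cleanup must be organised to exclude them; and second, carrying out the measure-theoretic comparison in arbitrary dimension, where one works with mixed wedge products $\pi_{i_1}^*c_1(\Lbar) \wedge \cdots$ rather than with the single bifurcation measure on a curve as in \cite{GKNY}.
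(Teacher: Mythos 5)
Your ``if'' direction and the soft reduction are fine, and the reduction is a genuine (and legitimate) variant of the paper's: you induct on $n$ using the pairwise coordinate projections and the curve case of \cite{GKNY} as the base, then use the induction hypothesis on $(m-1)$-coordinate projections to force dominance, whereas the paper proves the stronger small-point statement (Theorem \ref{eM^n}) without invoking \cite{GKNY}, reducing to hypersurfaces by taking Zariski closures of $\pi_{(j)}^{-1}(\pi_{(j)}(X))$ for projections to $d+1$ coordinates. Both routes arrive at the same hard case: an irreducible hypersurface $X'\subset\C^m$, defined over $\Qbar$, projecting dominantly to every collection of $m-1$ coordinates and carrying a Zariski dense set of special points.

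In that hard case your sketch has two concrete gaps. First, the comparison mechanism is missing. With the single bundle $\widehat{\cL}=\sum_{i}\pi_i^*\Lbar$, equidistribution produces exactly one limit measure, proportional to $\bigl(\sum_i\pi_i^*c_1(\Lbar)\bigr)^{\wedge(m-1)}$; since $(\pi_i^*c_1(\Lbar))^{\wedge 2}=0$, this is just (a multiple of) the \emph{sum} over $j$ of the measures $\mu_j=\bigwedge_{i\ne j}(\pi_i|_{X'})^*\mu_{\cM}$, and convergence of Galois orbits to this sum yields no identity between two different pullbacks of the bifurcation measure, so the ``rigidity'' you want to apply has no hypothesis to feed on. What is actually needed (and what Theorem \ref{equidistribution} does) is to run Yuan--Zhang \cite{Yuan:Zhang:quasiprojective} separately for the $m$ bundles $\overline{M}_j=\otimes_{i\ne j}(\pi_i|_{X'})^*\Lbar$ --- each of whose heights also vanishes on special points, and each non-degenerate precisely because the projection omitting the $j$-th coordinate is dominant --- and then use uniqueness of the weak limit of the \emph{same} sequence of Galois orbits to get $\mu_j=\alpha\,\mu_{j'}$ for $j\ne j'$. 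Second, the rigidity statement itself is asserted rather than proved: that equality of two such pullbacks (or the mixed-current identity in dimension $\ge 3$) forces $x_i\equiv x_j$ on $X'$ is the hard analytic input, not a known fact one can wave at via ``no flip symmetry.'' The paper obtains it by slicing the current identity along the fibers of the projection to the first $m-2$ coordinates (using \cite{Bassanelli:Berteloot} and the continuity of the potentials of $\mu_{\cM}$, so that $\pi_*T$ charges no pluripolar set) to reduce to an equality of measures on curve slices, and then applies Luo's inhomogeneity theorem (Theorem \ref{Luo}, \cite{Luo:inhomogeneity}): a conformal map between open pieces of $\del\cM$ is the identity, so each slice is the diagonal, and for $m\ge 3$ containment in all $\pi_{(i,j)}^{-1}(\Delta)$ contradicts $\dim X'=m-1$. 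Until you supply both the multi-bundle comparison and the slicing-plus-Luo (or a Böttcher-expansion argument à la \cite{GKNY} for the curve slices), the core of the converse is not established.
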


By definition, a parameter $c \in \C$ is {\bf special} if $f_c$ is PCF.  For any positive integer $n$, a point $(c_1, \ldots, c_n)\in\C^n$ is \textbf{special} if $c_i$ is special for all $i = 1, \ldots, n$.  We say that an irreducible curve $C\subset \mathbb{C}^2$ is \textbf{special} if it is one of the three types listed in Theorem \ref{M^2}.  The {\bf special subvarieties} of $\C^n$ are the preimages of special curves from projections to $\C^2$, and their intersections.  More precisely, an irreducible subvariety $Z$ of $\C^n$ is special if and only if there exist a partition $S_0 \cup \cdots \cup S_r$ of $\{1, \ldots, n\}$, where $r\geq 0$ and $S_k \not= \emptyset$ for each $k>0$, and a collection of PCF parameters $c_i \in \C$ for $i \in S_0$ 
%, and integers $N_{k,j} \in \N$ for each $j \not= i_k := \min S_k$ with $k >0$, 
so that 
	$$Z = \left( \bigcap_{i \in S_0} \{x_i = c_i\}\right) \cap \left( \bigcap_{k=1}^r \bigcap_{j  \in S_k} \{x_j = x_{i_k}\} \right)$$
where $(x_1, \ldots, x_n)$ are the coordinates of $\C^n$ and $i_k := \min S_k$ for each $k >0$.  Note that the dimension of $Z$ is equal to $r$.

Although Theorem \ref{M^n} is worded the same as statements about modular curves, the proof methods are (necessarily) very different.  As in the proof of Theorem \ref{M^2}, it is important that the PCF parameters are a set of algebraic numbers with bounded Weil height, which is not the case for singular moduli, and in fact of height 0 for a dynamically-defined height on $\P^1(\Qbar)$.  This allows the use of certain arithmetic equidistribution theorems for points of small height; we rely on the recent equidistribution result of Yuan-Zhang \cite{Yuan:Zhang:quasiprojective} (though we could have used the older result of \cite{Yuan:equidistribution} as we explain in Remark \ref{other eq}).  Focusing then on an archimedean place, and via the slicing of positive currents, we reduce the proof of Theorem \ref{M^n} to Luo's theorem on the inhomogeneity of the Mandelbrot set \cite{Luo:inhomogeneity}.

As an application of Theorem \ref{M^n}, we obtain uniform versions of Theorem \ref{M^2}, in the spirit of Scanlon's automatic uniformity \cite{Scanlon:automatic} (though we give a direct proof, not relying on \cite[Theorem 2.4]{Scanlon:automatic}).
%and inspired by uniformity results in the setting of families of abelian varieties [REFERENCES, including Bombieri-Masser-Zannier, Zilber?].

\begin{theorem}\label{uniformChow}
Fix $d\in\N$. There is a constant $M(d) < \infty$ such that 
$$\# \{\mbox{special points in } C \}\le M(d),$$
for all complex algebraic curves $C \subset \C^2$ of degree $d$ without special components.
\end{theorem}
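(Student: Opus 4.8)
The plan is to deduce Theorem \ref{uniformChow} from Theorem \ref{M^n} by a compactness argument in the space of degree-$d$ curves, combined with the arithmetic equidistribution input that already drives Theorem \ref{M^n}. Suppose the bound fails: then for some fixed $d$ there is a sequence of curves $C_m \subset \C^2$ of degree $d$, each without special components, such that $\#\{\text{special points in }C_m\} \to \infty$. The curves $C_m$ are parametrized by (the nonzero coefficient vectors of) their defining polynomials, a point of $\P^N$ with $N = \binom{d+2}{2}-1$; after passing to a subsequence we may assume $[C_m] \to [C_\infty]$ in $\P^N(\C)$ for some (possibly reducible, possibly non-reduced) degree-$d$ curve $C_\infty$.

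The next step is to feed the growing sets of special points on the $C_m$ into an equidistribution argument. Each special parameter $c$ is an algebraic number, and the PCF parameters form a set of bounded height (indeed height $0$ for the Call–Silverman canonical height $\hhat$ on the moduli space); so on each $C_m$ we have an infinite-in-the-limit collection of points whose coordinates have bounded height and whose $\Gal(\Qbar/\Q)$-orbits we may take into account. Applying the Yuan–Zhang equidistribution theorem \cite{Yuan:Zhang:quasiprojective} on $C_\infty$ (or rather on an irreducible component $C_\infty'$ carrying infinitely many of these points in the limit, in the sense of Zariski density of the accumulating special points), we find that $C_\infty'$ must carry a Zariski-dense set of special points — here one must be slightly careful that the special points on the $C_m$ genuinely accumulate Zariski-densely on a component of $C_\infty$, which follows because each $C_m$ has only the $N$-dimensional family's worth of degrees of freedom while the number of special points on it is unbounded, so infinitely many of them escape any fixed proper subvariety. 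By Theorem \ref{M^n} (in the case $n=2$, i.e.\ Theorem \ref{M^2}), the component $C_\infty'$ is special: a vertical or horizontal line through a PCF parameter, or the diagonal.

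Finally I would derive a contradiction from the fact that the $C_m$ have no special components while their limit does. The point is that ``containing many special points'' is, via the height/equidistribution machinery, a closed condition in a suitable sense: the measures $\mu_{C_m}$ (equilibrium measures of the Green's functions attached to the two coordinate projections restricted to $C_m$, at an archimedean place) converge, by continuity of these potential-theoretic objects in families of bounded degree, to the corresponding measure on $C_\infty'$, and the accumulation of special points forces this limiting measure to be the bifurcation measure supported on a special curve. One then shows that a curve $C_m$ with $[C_m]$ close enough to $[C_\infty']$ and with sufficiently many special points would itself have to share a component with the special curve $C_\infty'$ — using Luo's inhomogeneity theorem \cite{Luo:inhomogeneity} exactly as in the proof of Theorem \ref{M^n} to rule out a non-special curve approximating a special one while collecting too many PCF points near the relevant slice. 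This contradicts the hypothesis that $C_m$ has no special components, completing the proof.

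The main obstacle I anticipate is the ``Zariski-dense accumulation'' step: converting the purely numerical hypothesis $\#\{\text{special points in }C_m\}\to\infty$ into the statement that a limiting component carries a Zariski-dense (equivalently, infinite) set of special points, uniformly enough to invoke equidistribution. A clean way to handle this is to avoid limits of point-sets altogether and instead argue by Noetherian induction / a direct contradiction: if no $M(d)$ works, build a single curve or a tower witnessing infinitely many special points on a bounded-degree family and extract an honest special component, then contradict the no-special-component hypothesis. Making the uniformity genuinely effective-free but still valid will require care with the constants coming from the height bounds and from Luo's theorem, but no new dynamical input beyond what Theorem \ref{M^n} already uses.
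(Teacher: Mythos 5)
There is a genuine gap, and it sits exactly at the step you flagged yourself: converting $\#\{\mbox{special points in }C_m\}\to\infty$ on a \emph{varying} sequence of curves into a single algebraic variety carrying a Zariski-dense set of small points. Taking a coefficient limit $[C_m]\to[C_\infty]$ in $\P^{N}$ does not achieve this: the special points on $C_m$ do not lie on $C_\infty$, a limit of PCF parameters is typically a non-PCF point of $\del\cM$ (PCF parameters are dense in $\del\cM$), and the Yuan--Zhang theorem \cite{Yuan:Zhang:quasiprojective} only equidistributes Galois orbits of small points \emph{on} a fixed quasiprojective variety over $\Qbar$ --- it says nothing about points that merely accumulate near $C_\infty$. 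Your closing step is also unjustified and, as stated, false: a curve of degree $d$ through $N_d$ special pairs $(c_i,c_i')$ with $c_i'$ PCF and close to (but different from) $c_i$ can be arbitrarily close to the diagonal while sharing no component with it, so ``close to a special curve $+$ many special points $\Rightarrow$ common component'' cannot be extracted from Luo's rigidity theorem \cite{Luo:inhomogeneity}, which concerns exact local symmetries of $\del\cM$, not approximate ones. In effect the final step assumes a quantitative uniformity near special curves, which is the statement being proved.

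The paper's route is the one you gestured at in your last sentence but did not carry out, and it is where the higher-dimensional Theorem \ref{M^n} (not just the $n=2$ case of \cite{GKNY}) is indispensable. One works with the universal family $\cC\to V=X_d$ and its fiber powers $\rho_m:\cC_V^m\to\C^{2m}$: if no bound $M(d)$ existed, a symmetry-and-degree argument on the Zariski closure of the resulting tuples shows the special points of $\C^{2m}$ would be Zariski dense in $\rho_m(\cC_V^m)$ for every $m$; Theorem \ref{M^n} then forces the closure of $\rho_m(\cC_V^m)$ to be a special subvariety of $\C^{2m}$, while Proposition \ref{not special M} shows it is not special for $m=\dim V+1$ (since all coordinate projections $\pi_{ij}$ of $\rho_{\ell+1}(\cC_V^{\ell+1})$ are dominant), a contradiction; Noetherian induction on the exceptional locus finishes the proof, in the spirit of Scanlon's automatic uniformity \cite{Scanlon:automatic}. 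Your proposal, which invokes only Theorem \ref{M^2} on a limiting curve, bypasses this mechanism and therefore never produces the uniformity; to repair it you would need to replace the compactness argument by the fiber-power argument (or an equivalent device producing one fixed variety with Zariski-dense special points), at which point it becomes the paper's proof.
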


It is natural to ask how many special points can lie on a non-special curve in $\C^2$.  We obtain an explicit bound for the general curve of degree $d$:  

\begin{theorem}\label{optimal}
Fix $d\in\N$, and let $X_d$ denote the Chow variety of all plane curves with degree $\leq d$.
There exists a Zariski-closed strict subvariety $V_d\subset X_d$ such that 
	$$\# \{\mbox{special points in } C \} \le \frac{d(d+3)}{2},$$
for all curves $C\in X_d\setminus V_d$. 
\end{theorem}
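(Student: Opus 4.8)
The idea is to combine the qualitative finiteness from Theorem \ref{uniformChow} with a Bézout-type count that is sharp on a Zariski-dense open locus of the Chow variety. First I would observe that a plane curve of degree $d$ with no special component meets the vertical line $\{x = c_1\}$ through a PCF parameter $c_1$ in at most $d$ points (by Bézout, since the line is not a component), and likewise meets each horizontal PCF line $\{y = c_2\}$ in at most $d$ points, and the diagonal $\{x = y\}$ in at most $d$ points. A special point $(c_1,c_2) \in C$ lies on the vertical PCF line $\{x=c_1\}$, on the horizontal PCF line $\{y=c_2\}$, and — if $c_1 = c_2$ — on the diagonal. So the special points of $C$ are distributed among these special lines, at most $d$ on each. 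The count $d(d+3)/2$ is exactly $\binom{d+2}{2} - 1 = \dim X_d$, which strongly suggests the following mechanism: the general curve of degree $d$ passes through at most $\dim X_d$ prescribed points in "general position," and one shows that for $C$ outside a proper subvariety $V_d$, the special points of $C$ impose independent conditions, forcing their number to be at most $\dim X_d$.

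More concretely, here is the route I would take. By Theorem \ref{uniformChow} there is a uniform bound $M(d)$ on the number of special points on any degree-$d$ curve without special components; in particular the set of all special points appearing on \emph{some} such curve, call it $\Sigma_d \subset \C^2$, together with the incidence structure, is governed by finitely many data. For each finite subset $P \subset \Sigma_d$ with $\#P > d(d+3)/2$, the locus $X_P := \{C \in X_d : P \subset C\}$ is a linear subspace of $X_d$ of codimension $\min(\#P, \dim X_d + 1) \ge \dim X_d$ once $P$ imposes independent conditions — but I cannot assume independence for arbitrary configurations of PCF points, so instead I let $V_d$ be the union, over all such "large" $P$ that actually occur as the special locus of some degree-$d$ curve, of the components $X_P$. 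The key point is that there are only finitely many relevant $P$: any $P$ that is the full special locus of a degree-$d$ curve has $\#P \le M(d)$, and the PCF parameters of bounded height and bounded degree that can appear — hmm, this needs care, since there are infinitely many PCF parameters. The fix is to note that for $V_d$ to be a \emph{proper} subvariety of $X_d$, it suffices that $V_d$ is contained in a countable union of proper subvarieties whose Zariski closure is still proper; but a countable union of proper subvarieties of an irreducible variety need not be proper in the Zariski topology. So the real argument must be genuinely algebraic: I would instead work on the incidence variety $\mathcal{I} = \{(C, p) : p \in C, p \text{ special}\} \subset X_d \times \C^2$ and show that the fiber dimension over $X_d$ drops outside a proper closed $V_d$.

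The cleanest formulation: consider the universal curve $\mathcal{C}_d \to X_d$ and, for $k = \lceil d(d+3)/2 \rceil + 1$, the variety $Z_k \subset X_d$ of curves carrying at least $k$ special points; I want to show $Z_k$ is not Zariski-dense, i.e. $\overline{Z_k} = V_d \subsetneq X_d$. Suppose not: then $\overline{Z_k} = X_d$, so a \emph{very general} curve of degree $d$ carries $\ge k > \dim X_d$ special points. Now pass to a generic point or a one-parameter family: there is an irreducible curve $T \subset X_d$ and $k$ sections $s_1, \dots, s_k : T \to \C^2$ with $s_i(t)$ special and $s_i(t) \in C_t$ for all $t$. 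Each $s_i$ is a non-constant algebraic map $T \to \C^2$ whose image consists of special points — but the special points form a countable set, so each $s_i$ is \emph{constant}, say $s_i(t) \equiv p_i$. Hence the fixed $k$ points $p_1, \dots, p_k$ lie on every $C_t$, $t \in T$, and since $T$ is Zariski-dense in $X_d$ (as $\overline{Z_k} = X_d$ and $Z_k$ being constructible contains a dense open subset of $X_d$ — wait, $Z_k$ is constructible but I need it to contain a dense \emph{open} subset, which follows from $\overline{Z_k} = X_d$ and Chevalley), every degree-$d$ curve passes through $p_1, \dots, p_k$; that is impossible since $k > \dim X_d = \dim |\mathcal{O}_{\P^2}(d)|$ forces the linear system of degree-$d$ curves through $k$ general-enough... but $p_1,\dots,p_k$ are \emph{fixed}, not general — still, no $k$ distinct points can lie on \emph{every} curve of degree $d$ once $k \ge 2$, since e.g. a suitable union of lines avoids one of them. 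This contradiction shows $Z_k$ is not dense, so $V_d := \overline{Z_k}$ works.

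The main obstacle — and the step needing the most care — is the passage "$\overline{Z_k} = X_d \implies$ a family of sections of special points exists." Here I would use that $Z_k$ is a constructible subset of $X_d$ (it is the image of a projection from a closed subvariety of $X_d \times (\C^2)^k$ cut out by incidence and by the condition that the $k$ points are distinct, intersected with the closed condition "each point is special" — but "special" is only a countable condition, not closed!). This is the crux: the set of special points in $\C^2$ is not Zariski-closed, so $Z_k$ is \emph{not} obviously constructible. To circumvent this I would instead invoke Theorem \ref{M^n} directly: if infinitely many degree-$d$ curves (enough to be Zariski-dense in $X_d$) each carry $\ge k$ special points, I would build inside $X_d \times (\C^2)^k$ an algebraic subvariety $W$ dominating $X_d$ whose generic fiber over $X_d$ is a (nonempty) set of $k$-tuples of points on the curve, take an irreducible component $W_0$ of $W$ dominating $X_d$ and consider the Zariski closure in $(\C^2)^k$ of the union of the $W_0$-fibers — this is an irreducible subvariety $Y \subset (\C^2)^k$ containing a Zariski-dense set of special $k$-tuples, hence by Theorem \ref{M^n} (applied with $n = 2k$, or rather to each coordinate-pair) $Y$ is special, in particular $Y$ is contained in a product of PCF-lines and diagonals, so $Y$ is a finite union of points unless it contains a positive-dimensional special variety — and a positive-dimensional special variety in $(\C^2)^k = \C^{2k}$ has at least one coordinate free, meaning some coordinate $c_i$ ranges over all of $\C$, contradicting that all points of $Y$ are special (special forces \emph{every} coordinate to be a PCF parameter). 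Therefore $Y$ is finite, the sections are constant, and we finish as above. Making this reduction to Theorem \ref{M^n} rigorous — correctly setting up $W$ so that it is genuinely algebraic despite "special" being non-algebraic, probably by working one curve at a time and taking a limit of the arithmetic data, or by a direct argument that a Zariski-dense family forces a positive-dimensional algebraic family of special tuples — is where the real work lies.
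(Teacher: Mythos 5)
Your setup is on the right track and close in spirit to the paper's: the paper also passes to the fiber power $\cC_{X_d}^{N_d+1}$ (with $N_d = d(d+3)/2 = \dim X_d$), takes the Zariski closure of the $(N_d+1)$-tuples of special points coming from a putative generic sequence of curves with too many special points, and invokes Theorem \ref{M^n} to conclude that each component of that closure is a special subvariety contained in the closure of $\rho_{N_d+1}(\cC_{X_d}^{N_d+1})$ and dominating $X_d$. Up to that point your reduction is essentially the same, and it also correctly sidesteps the non-algebraicity of the condition ``special'' by closing up the special tuples rather than trying to make $Z_k$ constructible.

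The genuine gap is in your final step, where you claim that the closure $Y$ of the special $k$-tuples must be \emph{finite} because ``a positive-dimensional special variety has a free coordinate, contradicting that all points of $Y$ are special.'' This rests on a false premise: $Y$ is the Zariski closure of a set of special points, so it contains a dense set of special points, but its points are not all special -- and indeed positive-dimensional special subvarieties (such as the diagonal $\{x=y\}\subset\C^2$ and its preimages) consist almost entirely of non-special points. Worse, positive-dimensional special components really do occur inside the closure of the image: the repeated-point diagonals $\Delta_{i,j}=\{(x_i,x_{i+1})=(x_j,x_{j+1})\}$ are entirely contained in $\rho_{N_d+1}(\cC_{X_d}^{N_d+1})$, since any $(N_d+1)$-tuple with a repetition involves at most $N_d$ distinct points and hence lies on some degree-$d$ curve. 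So your contradiction evaporates, and the sections need not be constant. The real work, which your argument omits, is the paper's classification: using a fiber-dimension count (dominance over $X_d$ forces the intersection of a candidate special $S$ with a general $C\times\cdots\times C$ to have dimension at least $\dim S - N_d$) together with a case analysis of the possible special relations, one shows that the \emph{only} special subvarieties contained in the image closure with preimage dominating $X_d$ are contained in the diagonals $\Delta_{i,j}$; the contradiction then comes from the fact that the $N_d+1$ special points on each curve $C_n$ were chosen \emph{distinct}, so their tuples avoid every $\Delta_{i,j}$. Your proposal never uses distinctness at this crucial point and has no substitute for the dimension count, so as written it does not prove the bound.
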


The upper bound in Theorem \ref{optimal} is optimal: 

\begin{theorem} \label{sharp}
There is a Zariski-dense subset $S_d \subset X_d$ such that 
$$\# \{\mbox{special points in } C \} = \frac{d(d+3)}{2},$$
for all curves $C\in S_d$. 
\end{theorem}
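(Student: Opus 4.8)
The plan is to exhibit, for each $d$, a large family of degree-$d$ plane curves each passing through exactly $\frac{d(d+3)}{2}$ special points, and to show this family is Zariski dense in $X_d$. The starting observation is that $N := \frac{d(d+3)}{2} = \binom{d+2}{2} - 1$ is precisely the dimension of the linear system of plane curves of degree $\le d$: a general configuration of $N$ points in $\C^2$ lies on a unique curve of degree $\le d$. Since the special points $(c_1, c_2)$ with $f_{c_1}, f_{c_2}$ PCF form a countable Zariski-dense subset of $\C^2$ (the PCF parameters are dense in the boundary of the Mandelbrot set, hence Zariski dense in $\C$, so their products are Zariski dense in $\C^2$), I would first choose $N$ special points $p_1, \ldots, p_N$ in \emph{general position}, meaning no $d+1$ of them are collinear and more generally they impose independent conditions on curves of degree $\le d$; such a choice is possible because "general position" is a countable union of proper Zariski-closed conditions, each of which fails to contain all of $(\mathrm{PCF})^2$. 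Let $C$ be the unique curve of degree $\le d$ through $p_1, \ldots, p_N$.

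The next step is to verify that $C$ contains \emph{no other} special points, so that the count is exactly $N$ and not larger. Here I invoke Theorem \ref{uniformChow}: if $C$ has no special component, then $\#\{\text{special points in } C\} \le M(d) < \infty$. So $C$ meets only finitely many special points beyond $p_1,\ldots,p_N$; call this finite "excess" set $E(C)$. I then argue that by perturbing the choice of $p_1, \ldots, p_N$ among special points I can force $E(C) = \emptyset$. Concretely, one shows that the set of $N$-tuples $(p_1,\ldots,p_N) \in (\C^2)^N$ for which the interpolating curve acquires an extra special point is contained in a countable union of proper subvarieties of $(\C^2)^N$ (for each fixed special point $q$, the condition "$q \in C$" is one linear equation on the $N+1$ coefficients of $C$, hence a proper closed condition on the configuration once we already require $C$ to pass through $p_1, \ldots, p_N$), and again this union cannot swallow the Zariski-dense set $(\mathrm{PCF})^{2N}$. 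One also must exclude the possibility that $C$ has a special component: a vertical line $\{x = c_1\}$, horizontal line $\{y = c_2\}$, or the diagonal $\{x = y\}$ can each contain at most one of our generically chosen $p_i$ (two of the $p_i$ sharing a coordinate or lying on the diagonal is again a proper closed condition), so for $d \ge 1$ and $N \ge d+1$ a curve through all $N$ points cannot have such a component as long as $N > d$, which holds for all $d \ge 2$; the small cases $d = 1$ ($N=2$) must be checked directly, and there a generic line through two special points with distinct non-diagonal coordinates works.

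Finally I must check Zariski density of the resulting set $S_d \subset X_d$. The construction gives a map $(\mathrm{PCF})^{2N} \supset U \to X_d$ sending a general special configuration to its interpolating curve; since the interpolation map $(\C^2)^N \dashrightarrow X_d$ (defined on the open locus where the $N$ points impose independent conditions) is dominant — indeed generically finite onto $X_d$, as $\dim (\C^2)^N = 2N > \dim X_d = N$ and a general curve of degree $d$ does pass through $N$ general points of itself — and since $U$ is Zariski dense in $(\C^2)^N$, the image $S_d$ is Zariski dense in $X_d$. The main obstacle I anticipate is the bookkeeping in the second step: making rigorous that "having an excess special point" or "having a special component" is a \emph{proper} closed condition on the configuration space and that the Zariski-dense set $(\mathrm{PCF})^2$ (and its powers) genuinely avoids the relevant countable union — this rests on the density of PCF parameters in $\C$ together with the finiteness input from Theorem \ref{uniformChow}, and care is needed to handle the interplay between the two and the low-degree edge cases.
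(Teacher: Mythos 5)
Your construction of the lower bound (interpolate a curve of degree $d$ through $N_d=\frac{d(d+3)}{2}$ special points chosen off the degenerate locus, then push the Zariski-dense set of such configurations forward under the dominant interpolation map) is essentially the paper's Proposition \ref{lower bound}, and that half is fine (modulo the slip that the interpolation map $(\C^2)^{N_d}\dashrightarrow X_d$ is dominant with $N_d$-dimensional fibers, not generically finite). The genuine gap is in the step where you force the count to be \emph{exactly} $N_d$. You argue that the locus of configurations $(p_1,\ldots,p_{N_d})$ whose interpolating curve acquires an extra special point is a countable union of proper Zariski-closed subvarieties (one for each extra special point $q$), and that this union ``cannot swallow the Zariski-dense set $(\mathrm{PCF})^{2N_d}$.'' That inference is false: Zariski density only prevents containment in a \emph{single} proper closed subvariety (or a finite union), while the set of special configurations is countable and can perfectly well be contained in a countable union of proper closed subvarieties (trivially, it is a countable union of points). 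Theorem \ref{uniformChow} gives only finiteness of the excess set per curve with some bound $M(d)$, which does not preclude that \emph{every} curve through $N_d$ special points automatically carries an $(N_d+1)$-st one. The paper's own remark about symmetric conics shows this danger is real: in the $3$-dimensional family $x^2+y^2+Axy+B(x+y)+C=0$, three special points generically determine the curve and the symmetry forces three more, so in restricted families the ``excess'' phenomenon genuinely occurs and cannot be perturbed away by dimension counting alone.

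What is needed to close the gap is precisely the sharp upper bound of Theorem \ref{optimal}: outside a \emph{single} proper closed $V_d\subset X_d$, a curve has at most $N_d$ special points. That statement packages the countably many ``bad'' conditions into one proper closed subvariety, and its proof is the real content — it rests on Theorem \ref{M^n} and the classification of which special subvarieties of $\C^{2(N_d+1)}$ can sit inside the closure of $\rho(\cC_V^{N_d+1})$ with preimage dominating $X_d$ (only the diagonals $\Delta_{i,j}$ can). The paper then deduces Theorem \ref{sharp} in one line: intersect the Zariski-dense set of curves with at least $N_d$ special points (Proposition \ref{lower bound}) with the open dense complement of $V_d$. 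If you replace your perturbation argument by a citation of Theorem \ref{optimal} (or reprove its classification step), your proof becomes correct and coincides with the paper's; as written, the arithmetic input that rules out ubiquitous extra special points is missing.
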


Note that $d(d+3)/2$ is the dimension of the space $X_d$, and this is no accident.  It is well known that there exists a curve of degree $d$ through any collection of $N_d = d(d+3)/2$ points in $\C^2$.  Choosing those points to be special, we can build a Zariski-dense collection of curves in $X_d$ containing at least $N_d$ special points.  The upper bound of Theorem \ref{optimal} is obtained by showing there are no unexpected symmetries among general special-point configurations, as a consequence of Theorem \ref{M^n} and the explicit description of the special subvarieties.

Our proof does not give a complete description of the exceptional variety $V_d$ in Theorem \ref{optimal}, though the methods can be used to classify its positive-dimensional components.  For example, in the case of $d=1$, we show:

\begin{theorem}\label{lines}
All but finitely many non-special lines in $\C^2$ contain at most $2$ special points. 
\end{theorem}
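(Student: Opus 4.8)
The plan is to deduce this from Theorem \ref{optimal} (the case $d=1$) together with a direct analysis of the positive-dimensional components of the exceptional locus $V_1 \subset X_1$. By Theorem \ref{optimal}, all but a Zariski-closed strict subvariety of lines contain at most $d(d+3)/2 = 2$ special points, so it suffices to show that $V_1$ contains only finitely many lines. Equivalently, I want to rule out a one-parameter algebraic family of non-special lines each carrying three or more special points. Suppose for contradiction that such a family exists. Then there is an irreducible algebraic curve $\cL$ in $X_1$ (parametrizing lines) and three algebraic maps $c_1(\ell), c_2(\ell), c_3(\ell)$ from $\cL$ to $\C$ such that, for each $\ell \in \cL$, the points $c_i(\ell)$ are special and lie on the line $\ell$. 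Because the special parameters form a set of bounded height, the functions $c_i$ take values of bounded height along $\cL$, which forces each $c_i$ to be \emph{constant} on $\cL$ (a nonconstant algebraic function on a curve takes values of unbounded height).

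So along the family the three special parameters $c_1, c_2, c_3$ are fixed. But a line in $\C^2$ is determined by two of its points once those points are in ``general position'' on the line; more precisely, three fixed points $(c_1, *), (c_2, *), (c_3, *)$—wait, the coordinates on the line are not yet pinned down. Here is the correct bookkeeping: a special point on a line $\ell \subset \C^2$ is a point both of whose coordinates are special; write it as $(a,b)$ with $f_a, f_b$ PCF. If the line $\ell$ contains three special points $P_1, P_2, P_3$, each $P_j = (a_j, b_j)$ with $a_j, b_j$ special, then the $a_j$'s and $b_j$'s range over the (countable, bounded-height) special set. Running the family argument: along $\cL$, all six coordinates $a_j(\ell), b_j(\ell)$ have bounded height, hence are constant. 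Therefore the three points $P_1, P_2, P_3$ are fixed, so they determine a single line (if they are not collinear there is no such $\ell$, and if they are collinear the line is unique), contradicting the assumption that $\cL$ is a positive-dimensional family. Hence no such family exists, $V_1$ contains only finitely many lines, and Theorem \ref{lines} follows.

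The one subtlety is the degenerate case where the three special points are \emph{not} in general position on the line—e.g. two of them coincide, or the line is vertical or horizontal. If $P_1 = P_2$ identically along $\cL$ then effectively we only have two distinct special points, and two fixed points still determine a unique line; if the line is vertical $\{x = c_1\}$ with $c_1$ special then $\ell$ is special, contrary to hypothesis, and similarly for horizontal. So after discarding the special lines, any line with $\ge 3$ special points has at least two distinct such points, both fixed along any bounded-height family, pinning the line down. The main obstacle is making the bounded-height-implies-constant step fully rigorous: one must note that the special parameters lie in $\Qbar$ with height bounded by an absolute constant (this is exactly the input used in the proof of Theorem \ref{M^2} and recalled in the introduction), and that an irreducible curve in $X_1$ carrying three such height-bounded sections can only do so if the sections are constant, which is where the Northcott-type finiteness enters. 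Once that is in hand, the geometric conclusion—finitely many exceptional lines—is immediate.
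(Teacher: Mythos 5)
There is a genuine gap, and in fact the approach cannot work as stated. Your key move is to assert that a one-parameter family $\cL$ of lines, infinitely many of whose members carry three special points, produces three \emph{algebraic} maps $c_1(\ell),c_2(\ell),c_3(\ell)$ (sections) selecting those points. Nothing guarantees this: what you actually have is an infinite set of configurations $(\ell,P_1,P_2,P_3)$, and the special points jump around from line to line with no algebraic coherence. The correct replacement is to take the Zariski closure of these configurations inside a fiber power $\cC_V^2$ or $\cC_V^3$ of the family and classify which special subvarieties (in the sense of Theorem \ref{M^n}) it can contain; this is exactly what the paper does in Proposition \ref{prop M lines} via the case analysis (S1)--(S4), (C1)--(C4), and then Theorem \ref{lines} follows by combining that proposition with Theorem \ref{optimal}. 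Moreover, even granting the sections, your ``bounded height implies constant'' step is false: the PCF parameters are an \emph{infinite} set of bounded Weil height (their degrees are unbounded, so Northcott does not apply), and the identity map on $\A^1$ already takes bounded-height special values at infinitely many points without being constant. The true statement in this direction is Theorem \ref{M^2}/\ref{M^n} itself --- a curve meeting infinitely many special points must be special --- which rests on equidistribution, not on Northcott, and its conclusion is ``special,'' not ``constant.''

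A concrete way to see the argument is broken is that it proves too much: run your reasoning with two special points instead of three, and it would show that any one-parameter family of non-special, non-vertical/non-horizontal lines has only finitely many members containing $\geq 2$ special points. But the pencil of lines through a fixed special point $P$ (Example \ref{pencil}) and the family $\{x+y=\lambda\}$ each contain infinitely many lines with exactly two special points; in the pencil, the second special point is a genuinely nonconstant, non-algebraic ``section'' with bounded-height values, refuting your key lemma directly. The real content of Theorem \ref{lines} is precisely that on these exceptional one-parameter families the count stops at $2$ rather than $3$, and that requires the classification of special surfaces and curves inside $\rho_2(\cC_V^2)$ carried out in Proposition \ref{prop M lines}, not a height/rigidity argument along the base.
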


\noindent
In other words, the subvariety $V_1$ of Theorem \ref{optimal} can be taken to be the union of the 1-parameter families in $X_1$ of horizontal and vertical lines, together with a finite set of points in $X_1$.   A more detailed result about lines in $\C^2$ is stated as Proposition \ref{prop M lines}.  The analogue of Theorem \ref{lines} in the setting of CM points in $\C^2$ was proved in \cite{Bilu:Luca:Masser}.

Note that the finite set of non-special lines in $\C^2$ containing at least 3 special points is not empty.  For example, the line $\{y  = -x\}$ passes through $(0,0)$, $(i, -i)$, and $(-i, i)$; the line $\{y = i \, x\}$ passes through $(0,0)$, $(-1, -i)$, and $(i, -1)$; and $\{y = -i \, x\}$ passes through $(0,0)$, $(-1, i)$, and $(-i, -1)$.

\begin{question}
How many non-special lines in $\C^2$ pass through at least 3 distinct special points, and what is the optimal value of $M(1)$ in Theorem \ref{uniformChow}? 
\end{question}

As mentioned after Theorem \ref{M^2}, a real algebraic curve in $\R^2 = \C$ passing through given parameter $c$ in the Mandelbrot set will give rise to a complex algebraic curve in $\C^2$ passing through point $(c, \bar{c})$ (see Section \ref{real lines}).  Moreover, the subset of such curves is Zariski dense in $X_d$ for each degree $d \geq 1$.  Theorems \ref{uniformChow} and \ref{optimal} therefore apply to bound PCF parameters on real algebraic curves of a given degree in $\R^2 = \C$.  For example, we have:

\begin{theorem} \label{real}
There is a uniform bound on the number of PCF parameters on any real algebraic curve in $\R^2 = \C$ depending only on the degree of the curve (as long as the curve does not contain the real axis).  Moreover, there are only finitely many real lines in $\C$ that contain more than two PCF parameters.
\end{theorem}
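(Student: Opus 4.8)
The plan is to transfer the problem to $\C^2$ and apply Theorems~\ref{uniformChow} and~\ref{lines}. Write a real algebraic curve as $\Gamma = \{c \in \C : P(\Re c, \Im c) = 0\}$ with $P \in \R[x,y]$ of degree $d$, and set $Q(z,w) := P\bigl(\tfrac{z+w}{2}, \tfrac{z-w}{2i}\bigr) \in \C[z,w]$. Since $(x,y)\mapsto(\tfrac{z+w}{2},\tfrac{z-w}{2i})$ is an invertible linear substitution, $Q$ still has degree $d$; let $C \subset \C^2$ be its zero locus. The map $c \mapsto (c,\bar c)$ is an injection of $\Gamma$ into $C$ which sends PCF parameters to special points of $C$: indeed $\overline{f_c^n(0)} = f_{\bar c}^n(0)$, so the PCF locus in $\C$ is conjugation-invariant, whence $f_c$ PCF forces both coordinates of $(c,\bar c)$ to be special. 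It therefore suffices to bound, in terms of $d$, the number of special points of $C$ lying on the real locus $R := \{(c,\bar c) : c \in \C\}$.

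First I would strip off the special components of $C$. By Theorem~\ref{M^2} applied componentwise, a curve carries infinitely many special points only if it has a component equal to a vertical line $\{z=c_1\}$, a horizontal line $\{w=c_2\}$, or the diagonal $\{z=w\}$. The diagonal is a component of $C$ exactly when $Q(z,z)\equiv 0$, i.e. $P(x,0)\equiv 0$, i.e. when $\Gamma$ contains the real axis — which is excluded by hypothesis. So write $C = C_0 \cup L_1 \cup \cdots \cup L_m$, where each $L_i$ is a vertical or horizontal special line, $m\le d$, and $C_0$ has degree $\le d$ and no special component. Each vertical line $\{z=c_1\}$ meets $R$ only at $(c_1,\bar c_1)$, and each horizontal line $\{w=c_2\}$ only at $(\bar c_2,c_2)$, so the components $L_i$ account for at most $m \le d$ PCF parameters on $\Gamma$. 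The remaining PCF parameters correspond to special points of $C_0$, of which there are at most $M'(d) := \max_{1\le d'\le d} M(d')$ by Theorem~\ref{uniformChow}. Hence $\Gamma$ contains at most $M'(d)+d$ PCF parameters, proving the first assertion (uniformity in $d$ being the point; no attempt is made to optimize).

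For the second assertion, specialize to $d=1$. A real line in $\C$ has the form $\ell = \{c : \alpha c + \bar\alpha\,\bar c = \beta\}$ for some $\alpha \in \C^\times$ and $\beta \in \R$, with complexification $C_\ell = \{(z,w) : \alpha z + \bar\alpha w = \beta\}$, a genuine complex line. Since $\alpha \ne 0$, $C_\ell$ is neither vertical nor horizontal; and $C_\ell$ equals the diagonal exactly when $\alpha$ is purely imaginary and $\beta = 0$, i.e. exactly when $\ell$ is the real axis. Thus for every real line $\ell \ne \R$ the line $C_\ell$ is non-special, and $\ell \mapsto C_\ell$ is injective because $\ell = \{c : (c,\bar c)\in C_\ell\}$. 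A PCF parameter on $\ell$ gives a special point of $C_\ell$, so a real line carrying $\ge 3$ PCF parameters yields a non-special complex line carrying $\ge 3$ special points. By Theorem~\ref{lines} there are only finitely many such complex lines, hence only finitely many real lines in $\C$ other than the real axis contain more than two PCF parameters. (The real axis itself contains infinitely many PCF parameters, consistent with the stated exception.)

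The argument is essentially bookkeeping once Theorems~\ref{uniformChow} and~\ref{lines} are in hand; the only delicate point is recognizing that the hypothesis ``$\Gamma$ does not contain the real axis'' is precisely what forbids the diagonal — the unique special line type meeting $R$ in infinitely many points — from becoming a component of $C$.
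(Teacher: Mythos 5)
Your proof is correct and follows essentially the same route as the paper: complexify via $c\mapsto(c,\bar c)$, note the hypothesis on the real axis excludes the diagonal, and invoke Theorems \ref{uniformChow} and \ref{lines}. Your extra step of stripping off vertical/horizontal line components of the complexification (possible when $d\geq 2$, e.g.\ $x^2+y^2=0$ complexifies to $zw=0$) and observing each meets the locus $\{(c,\bar c)\}$ in at most one point is a welcome bit of care that the paper's brief argument glosses over.
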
 

\noindent
Note that the finite set of real lines in $\C$ containing more than two PCF parameters is not empty:  the real axis contains infinitely many and the imaginary axis contains at least 3 (at $c = 0$ and $c = \pm i$).  

\begin{remark}
Finiteness results analogous to Theorem \ref{lines}, upon replacing ``lines" with ``curves of degree $d$" and the bound of 2 with $d(d+3)/2$, do not hold for $d>1$.  For example, for algebraic curves of degree $d=2$, we know there is a conic through any 5 given points in $\C^2$, and 5 is the optimal bound on special points in general conics (by Theorems \ref{optimal} and \ref{sharp}), but there is a Zariski-dense set of curves in the 3-dimensional space of conics 
	$$x^2 + y^2 + A \, x \, y + B\, (x+y) + C = 0$$
in $\C^2$ containing at least 6 special points.  Indeed, 3 given special points in $\C^2$ will (generally) determine the coefficients $A,B, C$, and the $(x,y) \mapsto (y,x)$ symmetry of the curve will (generally) guarantee an additional 3 special points.  For real conics in $\R^2 = \C$, one can do the same with symmetry under complex conjugation; see Figure \ref{ellipse}. 
\end{remark}

\begin{figure} [h]
\includegraphics[width=2.75in]{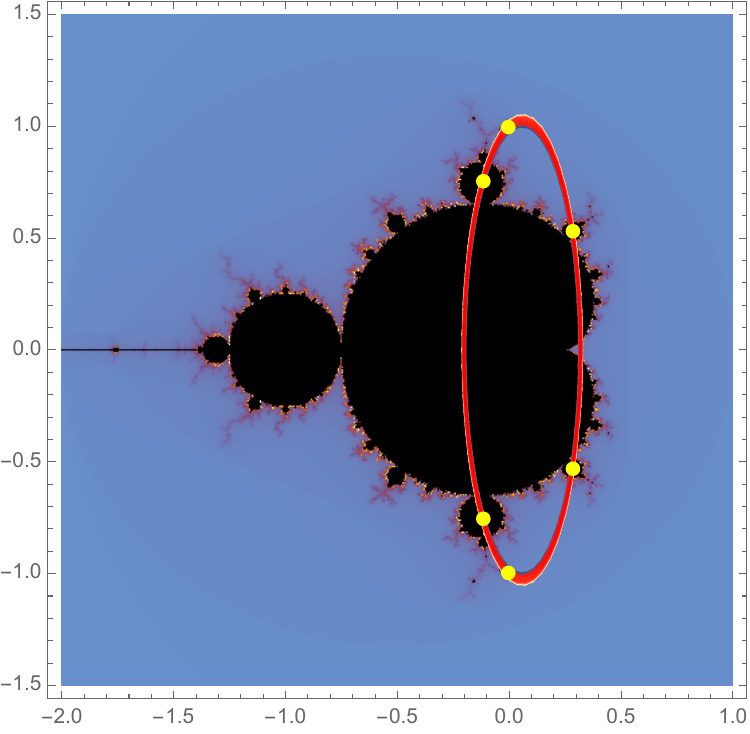}
\caption{ \small A general real conic in $\R^2$ will contain no more than 5 PCF parameters, but there are infinitely many symmetric conics with at least 6 special points.}
\label{ellipse}
\end{figure}

\bigskip\noindent{\bf Outline.}  In Section \ref{n special}, we prove Theorem \ref{M^n}.  Section \ref{maximal} provides a brief review of the Chow variety $X_d$ and basic results on families of curves passing through points.  Section \ref{upper} contains the proofs of Theorems \ref{uniformChow}, \ref{optimal}, \ref{sharp}, and \ref{lines}.  Finally, in Section \ref{real lines}, we look at real algebraic curves passing through PCF parameters in the Mandelbrot set and prove Theorem \ref{real}.

\bigskip\noindent{\bf Acknowledgements.}  Special thanks go to Hexi Ye, as this article grew out of conversations during his visit to Harvard in early 2023.  We are also grateful to Gabriel Dill, Holly Krieger, and Jacob Tsimerman for their questions and suggestions that led to Theorems \ref{uniformChow} and \ref{optimal}, and we thank Thomas Gauthier for helpful comments on an early version of this article. The figures were generated with Dynamics Explorer (developed by Brian and Suzanne Boyd) and Wolfram Mathematica.  Our research was supported by the National Science Foundation.  %The authors acknowledge the support of NSF grants DMS-2200981 and DMS-2246630.  

%%%%%%%%%%

\bigskip
\section{Proof of Theorem \ref{M^n}}
\label{n special}

In this section we prove Theorem \ref{M^n}. In fact we prove a stronger result, showing that our classification theorem remains true if we treat small points in addition to the special points. Our notion of size is given by a height function 
\begin{equation} \label{h crit}
h_{\mathrm{crit}}(c_1,\ldots,c_n) := \sum_{i=1}^n\hat{h}_{f_{c_i}}(0) \geq 0
\end{equation}
for $(c_1, \ldots, c_n) \in \Qbar^n$.  Here $\hat{h}_{f_{c_i}}$ is the canonical height associated to the quadratic polynomial $z^2+c_i$, introduced by Call and Silverman \cite{Call:Silverman}. 
We say that a sequence $\{x_k\}\subset \Qbar^n$ is  \textbf{small} if $h_{\mathrm{crit}}(x_k)\to 0$ as $k \to \infty$.  Notice that our special points of $\C^n$ are precisely the zeros of $h_{\mathrm{crit}}$.  

Let $Y\subset \C^n$ be a variety.  A sequence $\{x_k\}\subset Y$ is called \textbf{generic} if no subsequence lies in a proper subvariety of $Y$.

\begin{theorem}\label{eM^n}
Let $n\ge 2$. Let $X$ be an irreducible algebraic subvariety in $\C^n$ defined over $\Qbar$. Then, $X$ contains a generic sequence of small points  if and only if $X$ is special. 
\end{theorem}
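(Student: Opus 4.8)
The plan is to prove the two implications separately; the reverse one is routine, while the forward one carries the content.  For ``$X$ special $\Rightarrow$ $X$ has a generic small sequence'', I would put $X$ in the normal form of the introduction, with free coordinates $x_{i_1},\dots,x_{i_r}$ identifying $X$ with $\C^r$.  Any point with each $x_{i_k}$ a PCF parameter is special, hence has $h_{\mathrm{crit}}=0$; since the PCF parameters are infinite, hence Zariski dense in $\C$, these points are Zariski dense in $X$, and enumerating a countable dense subset gives a generic sequence of height-zero points.

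For the converse I would induct on $n$.  The case $n=1$ is immediate: an irreducible subvariety of $\C$ carrying a small sequence is either $\C$ itself or a single PCF point.  For $n\ge 2$, let $X\subset\C^n$ be irreducible over $\Qbar$ with a generic small sequence $\{x_k\}$, and first perform three reductions.  (i) If $\pi_i\colon X\to\C$ is constant $=c_i$, then $0\le\hat{h}_{f_{c_i}}(0)\le h_{\mathrm{crit}}(x_k)\to 0$ forces $c_i$ PCF, so $X\subset\{x_i=c_i\}$, and deleting $x_i$ makes $X$ an irreducible subvariety of $\C^{n-1}$ with a generic small sequence, special by induction; intersecting the special variety $\{x_i=c_i\}$ with the pullback shows $X$ is special.  (ii) If $\pi_i|_X=\pi_j|_X$ then $X\subset\{x_i=x_j\}$, and deleting $x_j$ realizes $X$ inside $\C^{n-1}$ with a generic small sequence; by induction, and because the defining ideal merely adds back $x_i=x_j$, $X$ is special.  (iii) If $X=X_1\times X_2$ for a partition of the coordinates into two nonempty blocks, then each $X_\ell$ inherits a generic small sequence (using $0\le h^{(\ell)}_{\mathrm{crit}}\le h_{\mathrm{crit}}$) and is special by induction, so $X$ is special.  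After these reductions --- which in particular settle $X=\C^n$ via (iii) --- I may assume $X\subsetneq\C^n$ is \emph{non-degenerate}: no coordinate is constant on $X$, no two coordinates agree on $X$, and $X$ is not a coordinate product.  It then suffices to derive a contradiction.

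Here I would bring in arithmetic equidistribution.  Let $\overline L$ be the semipositive adelic metrized line bundle on a projective model of $\C^n$ given by the sum of the pullbacks, along the $n$ coordinate projections, of the Call--Silverman dynamical metrized line bundle on $\P^1\supset\C$ with height $c\mapsto\hat{h}_{f_c}(0)$; then $h_{\overline L}=h_{\mathrm{crit}}$.  Since the coordinate functions generate $\C(X)$, some $m$-element set $S$ of coordinates with $m=\dim X$ is algebraically independent on $X$, so $\pi_S\colon X\to\C^S$ is dominant; as the bundle on $\P^1$ has vanishing self-intersection, this gives $(\overline L|_X)^{m}>0$, so $\overline L|_X$ is nef and big.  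The Yuan--Zhang equidistribution theorem on quasi-projective varieties \cite{Yuan:Zhang:quasiprojective} then applies (as would the earlier \cite{Yuan:equidistribution}): at the archimedean place the Galois orbits of $x_k$ equidistribute on $X(\C)$ toward the probability measure $\mu_{X,\infty}$ proportional to $\bigl(\sum_i\pi_i^*\mubif\bigr)^{m}|_X$.  Separately, each $\{\pi_i(x_k)\}$ is a generic small sequence in $\C$ for the $i$-th family (the $\pi_i$ are non-constant, by non-degeneracy), so equidistribution on $\P^1$ gives $\pi_i(x_k)\to\mubif$, whence $(\pi_i)_*\mu_{X,\infty}=\mubif$ for every $i$.

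The last step, which I expect to be the main obstacle, is rigidity.  Since $\pi_i^*\mubif\wedge\pi_i^*\mubif=0$ ($\mubif$ being carried by a curve), $\mu_{X,\infty}$ is a positive combination, over the $m$-subsets $S$ with $\pi_S|_X$ dominant, of the currents $\pi_S^*\mu_S|_X$, where $\mu_S$ is the product of copies of $\mubif$ on $\C^S$, each supported on $X\cap\pi_S^{-1}(\partial M\times\cdots\times\partial M)$ with $\partial M$ the boundary of the Mandelbrot set.  Comparing the contributions of two distinct such $S$ --- by slicing the closed positive currents $\pi_i^*\mubif|_X$ along generic fibers of a further coordinate projection, slices of closed positive currents remaining positive --- I would reduce to an identity $g_*\mubif\propto\mubif$ for a non-constant holomorphic map $g$ on an open subset of $\C$ meeting $\partial M$, i.e.\ a holomorphic self-transfer of the harmonic measure of the Mandelbrot set.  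Luo's inhomogeneity theorem \cite{Luo:inhomogeneity} then forces $g$ to be the identity, so two coordinate functions coincide on $X$, contradicting non-degeneracy.  The genuinely delicate points are making this current-slicing precise enough to meet the hypotheses of Luo's theorem, and verifying that $\overline L|_X$ falls within the scope of the Yuan--Zhang equidistribution theorem.
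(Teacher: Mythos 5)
Your easy direction, your reduction scheme (constant coordinates, coinciding coordinates, coordinate products, induction on $n$), and your general toolkit (Yuan--Zhang equidistribution at an archimedean place plus Luo's inhomogeneity theorem) all match the paper in spirit; the paper's own reduction is organized a little differently (it first reduces to hypersurfaces $H$ projecting dominantly on every set of $n-1$ coordinates, via closures of $\pi_{(j)}^{-1}(\pi_{(j)}(X))$), but that is not where the problem lies. The problem is the rigidity step, which is where the content of the theorem sits, and there your sketch has a genuine gap.

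You metrize a \emph{single} bundle $\overline{L}=\sum_i \pi_i^*\overline{L}_{\mathrm{crit}}$, so all you obtain is one limit measure $\mu_{X,\infty}\propto\bigl(\sum_i \pi_i^*\mu_{\cM}\bigr)^{m}\big|_X$ together with the pushforward identities $(\pi_i)_*\mu_{X,\infty}=\mu_{\cM}$. This is strictly weaker than what the argument needs. Nothing relates the contributions of two distinct $m$-subsets $S$ inside the sum defining $\mu_{X,\infty}$, so ``comparing the contributions of two distinct such $S$'' has no basis; and the pushforward identities, written out locally away from branch points, only give relations summed over the branches of the multivalued correspondence $\pi_j\circ(\pi_i|_X)^{-1}$, of the shape $\sum_k g_k^*\mu_{\cM}\propto\mu_{\cM}$, from which one cannot isolate the single-branch identity $g^*\mu_{\cM}\propto\mu_{\cM}$ that you want to feed into Luo's theorem. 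Moreover, Luo's theorem requires a conformal isomorphism with the two-sided condition $\phi(U\cap\del\cM)=V\cap\del\cM$, whereas a sum identity only gives each individual branch a one-sided support containment. The paper avoids exactly this trap by running Yuan--Zhang for $n$ \emph{different} adelic line bundles $\overline{M}_j=\otimes_{i\neq j}(\pi_i|_H)^*\overline{L}_{\mathrm{crit}}$, one omitting each coordinate, on a hypersurface projecting dominantly on each set of $n-1$ coordinates: the same Galois orbits must equidistribute to every $\mu_j$, so these measures coincide, and that equality --- after slicing the resulting identity of $(n-1,n-1)$-measures over the first $n-2$ coordinates, as in Bassanelli--Berteloot --- produces the genuine curve-level identity $(\pi_{n-1}|_{H_z})^*\mu_{\cM}=\alpha\,(\pi_n|_{H_z})^*\mu_{\cM}$, which is precisely the two-sided local statement to which Luo's theorem applies. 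Unless you replace your single bundle by this family of ``partial'' bundles (or otherwise manufacture an equality of two limit measures to slice), your final step does not close. The remaining points you gloss (that $h_{\overline{L}}(X)=0$ via the fundamental inequality and nefness, non-degeneracy of $\overline{L}|_X$ from a dominant coordinate projection, genericity and smallness of the projected sequences) are fine but secondary.
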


The idea of considering points that are small with respect to some height function originates in Bogomolov's work \cite{Bogomolov:original}; in a dynamical context, see for example \cite[Conjecture 2.3]{GHT:rational} or \cite{Zhang:distributions}.

\begin{remark}  \label{qbar}
In Theorem \ref{eM^n} we have assumed that $X$ is defined over $\Qbar$, which is not the case in Theorem \ref{M^n}. However, our special points are defined over $\Qbar$ so that a subvariety that contains a Zariski dense set of special points is automatically defined over $\Qbar$. Therefore, Theorem \ref{M^n} follows from Theorem \ref{eM^n}. Note here that the structure of the special subvarieties ensures that they contain a Zariski dense set of special points. 
\end{remark}

\begin{remark}  Assuming $X$ is a curve in $\C^2$, the conclusion of Theorem \ref{eM^n} is not contained in \cite{GKNY} but follows immediately from the proof of Theorem B in \cite{Favre:Gauthier:book}. % In \cite{GKNY} the authors relied on the existence of special points on the curve in their proof of \cite[Proposition 5.6]{GKNY}.
\end{remark}

\subsection{Arithmetic equidistribution}
The first key ingredient in our proof is the following equidistribution theorem.  Let $\cM$ denote the Mandelbrot set in $\C$.  Let $\mu_{\cM}$ denote the bifurcation measure on $\cM$.  As computed in \cite[\S6]{D:current}, $\mu_{\cM}$ is proportional to the harmonic measure supported on $\del \cM$ for the domain $\Chat\setminus \cM$, relative to the point at $\infty$.  The support of $\mu_{\cM}$ is equal to the boundary of $\cM$; it has continuous potentials and total mass equal to $\frac12$.

\begin{theorem}\label{equidistribution}
Let $n\ge 2$ and $H\subset \C^{n}$ be an irreducible hypersurface defined over a number field $K$. Assume that the projection $p_j:H\to \C^{n-1}$ omitting the $j$-th coordinate is dominant. 
Then, for any generic sequence $\{x_k\}\subset H(\Kbar)$ of small points, their $\Gal(\overline{K}/K)$-orbits equidistribute to the probability measure  
$$ \mu_j:= c \, (\pi_1|_{H})^{*}(\mu_{\cM})\wedge\cdots\wedge(\pi_{j-1}|_{H})^{*}(\mu_{\cM})\wedge(\pi_{j+1}|_{H})^{*}(\mu_{\cM})\wedge\cdots\wedge(\pi_n|_{H})^{*}(\mu_{\cM})$$
on $H(\C)$, where $\pi_i: \C^n\to \C$ is the projection to the $i$-th coordinate, $(\pi_i|_H)^*\mu_{\cM}$ is the pullback as a (1,1)-current, and $c>0$ is a positive constant. That is, for any continuous function $\phi$ on $H$ with compact support in the smooth part of $H$, we have  
	$$\frac{1}{\# \, \Gal(\Kbar/K) \cdot x_k} \sum_{y \in \Gal(\Kbar/K) \cdot x_k} \phi(y) \longrightarrow \int \phi \, d\mu_j$$
as $k\to\infty$. 
\end{theorem}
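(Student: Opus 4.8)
The plan is to deduce this from the arithmetic equidistribution theorem of Yuan--Zhang \cite{Yuan:Zhang:quasiprojective} applied to the quasi-projective variety $H$ equipped with a suitable adelic line bundle. First I would build the adelic metrized line bundle on $H$ whose height is (a multiple of) $h_{\mathrm{crit}}|_H = \sum_{i=1}^n \hhat_{f_{c_i}}(0)$. On $\C^n$ itself, for each coordinate $i$ the Call--Silverman canonical height $\hhat_{f_{c_i}}(0)$ is the height attached to an adelically metrized line bundle $\cL_i = \pi_i^* \cL_{\mathrm{crit}}$, where $\cL_{\mathrm{crit}}$ on $\A^1$ (or its natural compactification) is the bifurcation bundle: at the archimedean place its curvature form is the bifurcation measure $\mu_{\cM}$ (equivalently, $dd^c G$ where $G$ is the escape-rate function $c \mapsto \hhat_{f_c}(0)$), and at finite places one has the analogous non-archimedean Green's functions with continuous potentials. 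Summing, $h_{\mathrm{crit}}$ is the height for $\cL := \cL_1 \otimes \cdots \otimes \cL_n$, and its restriction $\cL|_H$ is a nef adelic line bundle on $H$ whose arithmetic volume (essential minimum) is $0$ because $H$ contains a generic sequence of small points by hypothesis.

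Next I would check the positivity/bigness hypotheses needed to invoke Yuan--Zhang: one must know that $\cL|_H$ is big, or at least that the relevant self-intersection number is strictly positive, so that the equidistribution conclusion is non-vacuous and the limit measure is a probability measure. This is exactly where the dominance hypothesis on $p_j$ enters. Since $H$ is a hypersurface in $\C^n$ and $p_j \colon H \to \C^{n-1}$ omitting the $j$-th coordinate is dominant (hence generically finite, as $\dim H = n-1$), the $n-1$ coordinate functions $\pi_i|_H$ for $i \ne j$ are "independent" enough that the wedge product $(\pi_1|_H)^*\mu_{\cM} \wedge \cdots \wedge \widehat{(\pi_j|_H)^*\mu_{\cM}} \wedge \cdots \wedge (\pi_n|_H)^*\mu_{\cM}$ is a well-defined nonzero positive measure of finite mass on $H(\C)$: the potentials $G \circ \pi_i$ are continuous (indeed $\mu_{\cM}$ has continuous potential, as noted in the excerpt), so the Bedford--Taylor wedge products make sense, and dominance of $p_j$ guarantees the top-degree product does not vanish identically. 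Arithmetically, this corresponds to $\deg_{\cL|_H}(H) = c^{-1} \int_H \bigwedge_{i \ne j}(\pi_i|_H)^*\mu_{\cM} > 0$, which is the bigness input. The archimedean component of the Yuan--Zhang equidistribution measure is then, by construction, the normalized wedge of curvature forms, which is precisely $\mu_j$; one reads off $c > 0$ as the normalizing constant $1/\deg_{\cL|_H}(H)$.

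Finally I would assemble: Yuan--Zhang gives that for a generic sequence of small points $\{x_k\} \subset H(\Kbar)$, the Galois orbits equidistribute with respect to the measure $\frac{1}{\deg_{\cL|_H}(H)} c_1(\cL|_H)^{n-1}$ at each place; restricting to a fixed archimedean place and testing against continuous $\phi$ compactly supported in the smooth locus of $H$ yields the stated convergence $\frac{1}{\#(\Gal(\Kbar/K)\cdot x_k)}\sum_{y} \phi(y) \to \int \phi \, d\mu_j$. The main obstacle I anticipate is verifying the precise hypotheses of the Yuan--Zhang theorem for this quasi-projective, possibly singular, $H$ — namely producing a genuine model and adelic line bundle over a suitable base, handling the compactification of $\A^1$ and the growth of the Green's functions at infinity, and confirming the strict positivity (bigness) of $\cL|_H$ from the dominance assumption rather than merely its nefness; the convergence of the measure and identification of its archimedean part are then formal. (As the excerpt indicates in Remark \ref{other eq}, one could instead run this argument with the earlier equidistribution theorem of \cite{Yuan:equidistribution} on a projective compactification of $H$, at the cost of checking the same positivity by hand.)
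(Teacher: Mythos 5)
Your overall strategy is the paper's: restrict to the smooth locus, build an adelic line bundle on the quasiprojective hypersurface $H$ out of pullbacks of $\overline{L}_{\mathrm{crit}}$ (whose archimedean curvature is $\mu_{\cM}$ and whose height is $\hhat_{f_t}(0)$), use nefness plus the fundamental inequality to show the height of $H$ itself is $0$, use the dominance hypothesis to get non-degeneracy, and invoke Yuan--Zhang's quasiprojective equidistribution theorem. However, there is a genuine gap in your choice of line bundle. You take the \emph{full} tensor product $\cL = \pi_1^*\cL_{\mathrm{crit}} \otimes \cdots \otimes \pi_n^*\cL_{\mathrm{crit}}$ restricted to $H$, and then assert that the archimedean component of the limiting measure ``is precisely $\mu_j$.'' It is not: since each $(\pi_i|_H)^*\mu_{\cM}$ has vanishing wedge-square, one computes
$$c_1(\cL|_H)^{\wedge (n-1)} \;=\; (n-1)!\,\sum_{k=1}^{n}\ \bigwedge_{i\neq k}(\pi_i|_H)^{*}(\mu_{\cM}),$$
so Yuan--Zhang applied to $\cL|_H$ gives equidistribution to the normalization of this \emph{symmetric sum} over all omitted indices, not to the single term $\mu_j$. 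The extra terms need not vanish (for the diagonal in $\C^2$ both terms are equal and nonzero), so your identification of the limit measure fails, and with it the theorem as stated. This is not a cosmetic issue: the downstream argument (Proposition \ref{dominant hyper}) needs equidistribution of the \emph{same} sequence to each individual $\mu_j$ with $p_j$ dominant, in order to equate $\mu_{n-1}$ and $\mu_n$ up to a constant and then slice; knowing only convergence to the symmetric average does not yield that comparison.

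The repair is exactly what the paper does: for the fixed $j$, work with $\overline{M}_j := \bigotimes_{i\neq j}(\pi_i|_H)^{*}\overline{L}_{\mathrm{crit}}$, omitting the $j$-th factor. Then $c_1(\overline{M}_j)^{\wedge(n-1)} = (n-1)!\bigwedge_{i\neq j}(\pi_i|_H)^*(\mu_{\cM})$ is (a constant times) $\mu_j$, the dominance of $p_j$ makes this measure nonzero and hence $\overline{M}_j$ non-degenerate, and your height argument still applies because $h_{\overline{M}_j}(x_k) = \sum_{i\neq j} h_{\mathrm{crit}}(\pi_i(x_k))$ is a subsum of nonnegative terms of $h_{\mathrm{crit}}(x_k)\to 0$; combined with nefness and the fundamental inequality this gives $h_{\overline{M}_j}(H)=0$ and the equidistribution conclusion for $\mu_j$. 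Your remaining concerns (singular locus, quasiprojectivity) are handled as you suggest and as in the paper: $\mu_j$ has continuous potentials, so it does not charge $H^{\mathrm{sing}}$ and one may replace $H$ by its smooth part; no bigness beyond non-degeneracy is required in the Yuan--Zhang framework.
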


To prove Theorem \ref{equidistribution} we rely on the recent theory of Yuan and Zhang on adelic line bundles developed in \cite{Yuan:Zhang:quasiprojective}. 
We let $f: \mathbb{A}^1\times \mathbb{P}^1\to \mathbb{A}^1\times \mathbb{P}^1$ be the algebraic family of unicritical quadratic polynomials 
	$$f(t,z)=(t,z^2+t),$$
defined over $\Q$. 
Let $L$ be the line bundle on $\mathbb{A}^1\times \mathbb{P}^1$, isomorphic to $\mathcal{O}(1)$ on fibers $\mathbb{P}^1$ and such that $f^{*}L=2L$. We denote by $\overline{L}_f$ the $f$-invariant extension of $L$ as defined in \cite[Theorem 6.1.1]{Yuan:Zhang:quasiprojective}. 
Let $i: \mathbb{A}^1\to \mathbb{A}^1\times  \mathbb{P}^1$ be defined by $i(t)=(t,0)$ and 
define 
\begin{align*}
\overline{L}_{\mathrm{crit}}:=i^{*}\overline{L}_f.
\end{align*}
This is an adelic line bundle on $\mathbb{A}^1$ as in \cite[\S6.2.1]{Yuan:Zhang:quasiprojective}. 
Furthermore, 
by \cite[Lemma 6.2.1]{Yuan:Zhang:quasiprojective}, the height associated to $\overline{L}_{\mathrm{crit}}$ is given by 
\begin{align}\label{bif height}
h_{\overline{L}_{\mathrm{crit}}}(t)=\hat{h}_{f_t}(0)=h_{\mathrm{crit}}(t),
\end{align}
for each $t\in \mathbb{A}^1(\Qbar)$, where $h_{\mathrm{crit}}$ is the height defined in \eqref{h crit} with $n=1$.  By construction, we have 
\begin{equation} \label{arch c1}
	c_1(\overline{L}_{\mathrm{crit}})=\mu_{\cM}
\end{equation}
at the archimedean place of $\Q$.

%$\overline{L}_{f}$ as in \cite[\S 6.2.2]{YZv5} It has height $h_{\mathrm{bif}}$ and curvature form $\mu_{\cM}$. 

\begin{remark} \label{other eq}
To prove Theorem \ref{equidistribution} we use the recent equidistribution theory in quasiprojective varieties developed in \cite{Yuan:Zhang:quasiprojective}. However, it is known that $\overline{L}_{\mathrm{crit}}$ extends to define an adelic metrized line bundle on the projective line $\mathbb{P}^1$; see, for example, \cite[\S 6.5]{FRL:quantitative}.   Arguments similar to the ones in \cite{Mavraki:Schmidt:Wilms} would allow us to use the equidistribution result established by Yuan \cite{Yuan:equidistribution} instead, applied to a projective compactification of the $H$ in Theorem \ref{equidistribution} and a modification of the metrized line bundles $\overline{M}_j$ we define in \eqref{Mj} below, but the results from \cite{Yuan:Zhang:quasiprojective} considerably simplify the exposition here.
\end{remark}

\begin{proof}[Proof of Theorem \ref{equidistribution}]
Fix $j$ as in the statement of the theorem.  As $\mu_{\cM}$ has continuous potentials, we deduce that $\mu_j$ does not put mass on the singular locus $H^{\mathrm{sing}}$ of $H$, so we may replace $H$ with $H\setminus H^{\mathrm{sing}}$ and assume that $H$ is smooth.
We define a metrized line bundle on the smooth, quasiprojective hypersurface $H$ by
\begin{align} \label{Mj}
\overline{M}_j=\displaystyle\otimes_{i\neq j}(\pi_i|_{H})^{*}(\overline{L}_{\mathrm{crit}}). 
\end{align}
This defines an adelic line bundle on $H$, so that $\overline{M}_j\in \widehat{\mathrm{Pic}}(H)_{\mathbb{Q}}$ in the notation of \cite{Yuan:Zhang:quasiprojective}.
By \cite[Theorem 6.1.1]{Yuan:Zhang:quasiprojective} we know that $\overline{L}_f$ is nef in the sense of \cite{Yuan:Zhang:quasiprojective} and by the functoriality of nefness we also have that $\overline{M}_j$ is nef; see \cite[page 8]{Yuan:Zhang:quasiprojective}.  
In what follows we work with the standard absolute on $\mathbb{C}$.
As in \cite[Lemma 6.3.7]{Yuan:Zhang:quasiprojective}, we know from \eqref{arch c1} that the curvature form associated to $\overline{L}_{\mathrm{crit}}$ (at the archimedean place of $\Q$) is equal to $\mu_{\cM}$.  Since $c_1(\overline{L}_{\mathrm{crit}})^{\wedge 2}\equiv 0$, we thus see that 
%$$c_1(\overline{M}_j)^{\wedge (n-1)}= (n-1)! \, (\pi_1|_{H})^{*}(\mu_{\cM})\wedge\cdots\wedge(\pi_{j-1}|_{H})^{*}(\mu_{\cM})\wedge(\pi_{j+1}|_{H})^{*}(\mu_{\cM})\wedge\cdots\wedge(\pi_n|_{H})^{*}(\mu_{\cM}),$$
$$c_1(\overline{M}_j)^{\wedge (n-1)}= (n-1)! \, (\pi_1|_{H})^{*}(\mu_{\cM})\wedge\cdots\wedge\widehat{(\pi_{j}|_{H})^{*}(\mu_{\cM})}\wedge\cdots\wedge(\pi_n|_{H})^{*}(\mu_{\cM}),$$
where the $\widehat{\,\cdot\,}$ means the $j$th term is omitted and the pullbacks are defined in the sense of currents. 
Our assumption that the projection $p_j$ is dominant ensures that this measure is non-trivial. By \cite[Lemma 5.4.4]{Yuan:Zhang:quasiprojective}, we infer that $\overline{M}_j$ is non-degenerate as defined in \cite[\S 6.2.2]{Yuan:Zhang:quasiprojective}.
In other words, the adelic line bundle $\overline{M}_j$ satisfies all the assumptions of \cite[Theorem 5.4.3]{Yuan:Zhang:quasiprojective}. Thus, if $\{y_k\}\subset H(\Qbar)$ is a generic sequence with $h_{\overline{M}_j}(y_k)\to h_{\overline{M}_j}(H)$, then its Galois conjugates equidistribute with respect to the probability measure associated to $c_1(\overline{M}_j)^{\wedge (n-1)}$. 

Now let $\{x_k\}\subset H(\Qbar)$ be a generic sequence of small points, as in the statement of the theorem. 
Note that $$h_{\overline{M}_j}(x)=\displaystyle\sum_{i\neq j}h_{\mathrm{crit}}(\pi_i(x)),$$
for $x \in H(\Qbar)$ so that by \eqref{bif height} we have 
\begin{align}\label{small sequence}
\lim_{k\to\infty} h_{\overline{M}_j}(x_k)=0.
\end{align}
Therefore by the number field case of the fundamental inequality \cite[Theorem 5.3.3]{Yuan:Zhang:quasiprojective}, we have that 
\begin{align}
 h_{\overline{M}_j}(H)\le 0. 
\end{align}
Note that here we have used the fact that $\overline{M}_j$ is nef and non-degenerate. By the nefness of $\overline{M}_j$ we also have that $ h_{\overline{M}_j}(H)\ge 0$ by \cite[Proposition 4.1.1]{Yuan:Zhang:quasiprojective}. Thus, 
\begin{align*}
 h_{\overline{M}_j}(H)=0. 
\end{align*}
Therefore, the result follows by the equidistribution theorem \cite[Theorem 5.4.3]{Yuan:Zhang:quasiprojective}.
\end{proof}

\subsection{Inhomogeneity of $\cM$}
To deduce Theorem \ref{eM^n}, we will combine Theorem \ref{equidistribution} with the following result of Luo that the Mandelbrot set has no local symmetries. 

\begin{theorem} \cite{Luo:inhomogeneity} \label{Luo}
Let $U$ be an open set in $\C$ with $U \cap \del \cM \not= \emptyset$.  Suppose $\phi: U \to V$ is a conformal isomorphism such that $\phi(U \cap \del \cM) = V \cap \del \cM$.  Then $\phi$ is the identity.
\end{theorem}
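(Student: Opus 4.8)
The plan is to reduce the statement to a rigidity property at Misiurewicz parameters and then close with the identity theorem. We may assume $\phi$ is holomorphic (an anti-holomorphic map such as $z\mapsto\bar z$ preserves $\del\cM$, so holomorphy is essential to the conclusion) and that $U$ is connected. Recall that the Misiurewicz parameters --- those $c$ for which $0$ is strictly preperiodic under $f_c$, so that the critical orbit eventually lands on a repelling cycle --- are dense in $\del\cM$. Fixing any $c_*\in U\cap\del\cM$, the Misiurewicz parameters accumulating at $c_*$ eventually lie in the open set $U$, so the set $S$ of Misiurewicz parameters in $U$ is infinite with an accumulation point in $U$. Hence it suffices to prove the local statement: if $\psi$ is a holomorphic germ at a Misiurewicz parameter $c_0$ carrying the germ of $\del\cM$ at $c_0$ onto the germ of $\del\cM$ at $c_1:=\psi(c_0)$, then $c_1=c_0$. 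Applying this with $\psi=\phi$ at each point of $S$, the holomorphic function $\phi(z)-z$ vanishes on $S$, hence identically, so $\phi(z)\equiv z$.

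For the local statement I would invoke Tan Lei's theorem on the asymptotic similarity of $\cM$ with Julia sets at Misiurewicz points. If $c_0$ is Misiurewicz with critical orbit landing on a repelling cycle of multiplier $\mu$, $|\mu|>1$, there is $\lambda$ with $|\lambda|>1$ a power of $\mu$, and a closed set $A_\infty\subset\C$ with $\lambda A_\infty=A_\infty$, such that the magnifications $\lambda^n(\del\cM-c_0)$ and $\lambda^n(J_{c_0}-c_0)$ both converge, in the Hausdorff topology on compacts, to $A_\infty$ as $n\to\infty$. So $\del\cM$ has a well-defined blow-up model $(A_\infty,\,z\mapsto\lambda z)$ at $c_0$, and this model, with its self-map, is the blow-up of $J_{c_0}$ at the preperiodic point $c_0$, which is in turn conjugate to the germ of a first-return map of $f_{c_0}$ at its repelling cycle. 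Since $\psi$ is holomorphic, $\psi(c_0+\zeta)=c_1+\psi'(c_0)\zeta+O(\zeta^2)$, so the rescaled maps $\zeta\mapsto\lambda^n(\psi(c_0+\lambda^{-n}\zeta)-c_1)$ converge locally uniformly to $\zeta\mapsto\psi'(c_0)\zeta$; combined with the convergence at $c_0$ this gives $\lambda^n(\del\cM-c_1)\to\psi'(c_0)A_\infty$. Thus $\del\cM$ is also asymptotically self-similar at $c_1$, with blow-up model $(\psi'(c_0)A_\infty,\,z\mapsto\lambda z)$, which is $\C$-linearly equivalent to the model at $c_0$ together with its self-map.

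The crux --- the step I expect to be the main obstacle --- is the rigidity statement that the conformal class of this blow-up model, with its linear self-map, determines the parameter. First, possessing a blow-up model with an \emph{expanding} linear self-map forces $c_1$ to be Misiurewicz (a converse to Tan Lei's theorem: hyperbolic, parabolic, infinitely renormalizable, and typical ``stochastic'' parameters display no such asymptotic self-similarity). Then Tan Lei's identification, now applied at $c_1$, makes $(\psi'(c_0)A_\infty,\,z\mapsto\lambda z)$ the blow-up of $J_{c_1}$ at $c_1$, conjugate to a first-return-map germ of $f_{c_1}$; so the $\C$-linear equivalence of the two models upgrades to a conformal conjugacy between repelling-cycle germs of $f_{c_0}$ and $f_{c_1}$. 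By Koenigs linearization the multipliers agree, and the conjugacy propagates along the functional equation to a conformal conjugacy on a neighborhood of the (filled) Julia sets, then to all of $\C$ by pulling back under iterates of $f_{c_0}$; a conformal automorphism of $\C$ is affine, and the only affine conjugacy between $z^2+c_0$ and $z^2+c_1$ is the identity, forcing $c_0=c_1$. This part rests on Tan Lei's similarity theorem and on the rigidity of quadratic polynomials (absence of invariant line fields on their Julia sets). With the local statement established, $\phi$ fixes the infinite accumulating set $S$, and the identity theorem yields $\phi(z)\equiv z$.
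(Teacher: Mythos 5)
This statement is not proved in the paper at all: it is quoted as Luo's theorem and used as a black box, so your proposal has to stand on its own. Its outer layer is fine: Misiurewicz parameters are dense in $\del\cM$, a holomorphic germ transports Tan Lei's asymptotic similarity (up to a linear factor $\phi'(c_0)$), and once one knows $\phi$ fixes every Misiurewicz parameter in $U$ the identity theorem finishes the argument. But the two steps you yourself label the crux are exactly the content of the theorem, and you assert rather than prove them. The ``converse to Tan Lei'' --- that asymptotic self-similarity of $\del\cM$ at $c_1$ under an expanding linear map forces $c_1$ to be Misiurewicz --- is not an available result, and your parenthetical list of excluded classes is not even the relevant one: boundary points are never hyperbolic, and you would have to rule out parabolic parameters, boundaries of Siegel components, Cremer parameters, finitely renormalizable parameters with recurrent critical orbit, infinitely renormalizable parameters, etc., with no argument offered for any of them. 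Without this step you cannot invoke Tan Lei's theorem at $c_1$, and the proof stops there.

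Even granting that $c_1$ is Misiurewicz, the final upgrade is a non sequitur as written. A ``conformal conjugacy between repelling-cycle germs'' carries no information beyond equality of multipliers: by Koenigs, any two repelling germs with the same multiplier are locally conformally conjugate, and such a local conjugacy at a single periodic point does not ``propagate along the functional equation'' to a conjugacy of $f_{c_0}$ and $f_{c_1}$ near their Julia sets --- if it did, any two quadratic polynomials sharing one repelling multiplier would be affinely conjugate, which is false. What the linear equivalence of blow-up models actually gives is an identity $\alpha A_{c_0}=A_{c_1}$ between the full linearized Julia sets together with the scaling $z\mapsto\lambda z$, and you must in addition reconcile the transported factor $\lambda$ with the intrinsic Tan Lei factor at $c_1$ (a priori a power of the multiplier at $c_1$, not $\lambda$). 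Showing that this global $\lambda$-invariant set, up to a $\C$-linear map, determines the parameter is itself a genuine rigidity statement, and the appeal to ``absence of invariant line fields'' does not supply it. These two missing ingredients are essentially all of the difficulty, which is why the paper simply cites \cite{Luo:inhomogeneity} rather than reproving the result.
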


We begin by proving Theorem \ref{eM^n} for a certain class of hypersurfaces.

\begin{prop}\label{dominant hyper}
Let $n\ge 2$. Assume that $H\subset \C^n$ is an irreducible hypersurface, defined over $\Qbar$, which projects dominantly on each collection of $n-1$ coordinates and which contains a generic sequence of small points. Then $n=2$ and $H\subset \C^2$ is the diagonal line.
\end{prop}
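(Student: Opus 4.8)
The plan is to combine the arithmetic equidistribution theorem (Theorem \ref{equidistribution}) with Luo's inhomogeneity theorem (Theorem \ref{Luo}) to rule out all hypersurfaces $H\subset\C^n$ that project dominantly onto every coordinate hyperplane, except the diagonal in $\C^2$. Suppose $H$ is such a hypersurface carrying a generic sequence $\{x_k\}$ of small points. By Theorem \ref{equidistribution}, for each index $j\in\{1,\ldots,n\}$ the Galois orbits of the $x_k$ equidistribute to the measure $\mu_j$, which is (a positive constant times) the wedge of the pullbacks $(\pi_i|_H)^*\mu_{\cM}$ over all $i\neq j$. Since the limiting measure of a fixed equidistributing sequence is unique, we get $\mu_i = \mu_j$ for all $i,j$. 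The first step is to extract geometric consequences from these current identities.

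Next I would localize near a generic point of $\del(\pi_i|_H)^{-1}(\del\cM)$ and use the product structure. After the change of coordinates given by the projections $\pi_i$, the currents $(\pi_i|_H)^*\mu_{\cM}$ are the pullbacks of the single measure $\mu_{\cM}$ on $\C$ under the holomorphic maps $\pi_i|_H: H\to\C$. The key point: $\mu_{\cM}$ has continuous potential $g_{\cM}$ (the Green's function / escape-rate function of the Mandelbrot set), so $(\pi_i|_H)^*\mu_{\cM} = dd^c(g_{\cM}\circ\pi_i|_H)$, and the support of $\mu_j$ is contained in $\bigcap_{i\neq j}(\pi_i|_H)^{-1}(\del\cM)$. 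Comparing $\mu_1 = \mu_2$ forces, at the level of supports and potentials, a relation tying $\pi_1|_H$ and $\pi_2|_H$ together. Concretely, on a suitable open piece of $H$ where $\pi_1|_H$ and $\pi_2|_H$ are local biholomorphisms onto open subsets of $\C$, the composition $\phi := (\pi_2|_H)\circ(\pi_1|_H)^{-1}$ is a conformal isomorphism of open sets in $\C$ carrying $\del\cM$ to $\del\cM$ locally. By Luo's theorem (Theorem \ref{Luo}), $\phi$ is the identity, hence $\pi_1 = \pi_2$ on $H$, i.e. $H$ lies in $\{x_1 = x_2\}$. Running the same argument for every pair of indices gives $H\subset\{x_i = x_j\}$ for all $i,j$, so $H$ is contained in the small diagonal of $\C^n$; since $H$ is a hypersurface this is only possible when $n=2$, and then $H$ is exactly the diagonal line. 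One checks directly that the diagonal in $\C^2$ does contain a Zariski-dense set of special points (e.g. all $(c,c)$ with $f_c$ PCF), consistent with the conclusion.

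The main obstacle I expect is making the passage from "the wedge currents $\mu_i$ all coincide" to "the coordinate projections are locally conformally conjugate in a way that feeds into Luo's theorem" fully rigorous. Two subtleties arise. First, the $\mu_i$ live on $H$, which may be singular; but $\mu_{\cM}$ has continuous potentials, so (as already noted in the proof of Theorem \ref{equidistribution}) these measures put no mass on $H^{\mathrm{sing}}$, and we may work on the smooth locus. Second, and more seriously, one must ensure that on the relevant open set the maps $\pi_i|_H$ are genuinely local biholomorphisms onto their images and that the set $(\pi_i|_H)^{-1}(\del\cM)$ is not degenerate — this is where the hypothesis that $H$ projects dominantly onto each collection of $n-1$ coordinates is used, guaranteeing each $\pi_i|_H$ is non-constant and that the pulled-back currents are nonzero (hence $\mu_j\neq 0$). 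The bookkeeping of which currents appear in $\mu_i$ versus $\mu_j$ — they differ only in which single factor is omitted — needs care: comparing $\mu_1$ and $\mu_2$ one cancels the common factors $(\pi_i|_H)^*\mu_{\cM}$ for $i\neq 1,2$ (using that these are closed positive currents with continuous potentials, so wedging is well-defined and one can argue locally on the complement of their supports, or use a slicing argument), leaving a comparison of $(\pi_2|_H)^*\mu_{\cM}$ with $(\pi_1|_H)^*\mu_{\cM}$ on an open piece, which is exactly the input for Luo's theorem.
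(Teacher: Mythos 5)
Your overall strategy is the paper's strategy: apply Theorem \ref{equidistribution} for each omitted index $j$, use uniqueness of the limit measure to equate the $\mu_j$'s (up to a positive constant), feed the resulting comparison of pulled-back bifurcation measures into Luo's theorem to force containment in the pairwise diagonals $\{x_i=x_j\}$, and conclude by a dimension count that only $n=2$ with $H$ the diagonal survives. For $n=2$ your argument is exactly the paper's and is fine.

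The gap is in how you pass, for $n\ge 3$, from the equality of the $(n-1,n-1)$-measures to a comparison of the two $(1,1)$-currents $(\pi_{n-1}|_H)^*\mu_{\cM}$ and $(\pi_n|_H)^*\mu_{\cM}$. You propose to ``cancel the common factors'' in $T\wedge(\pi_{n-1}|_H)^*\mu_{\cM}=\alpha\, T\wedge(\pi_n|_H)^*\mu_{\cM}$, where $T=\bigwedge_{i\le n-2}(\pi_i|_H)^*\mu_{\cM}$; but wedge products of positive closed currents cannot be cancelled: the equality only records how the two $(1,1)$-currents act along the mass of $T$, and says nothing off it (arguing ``on the complement of their supports'' gives nothing, since both sides vanish there). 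Moreover, your concrete mechanism --- an open piece of $H$ on which $\pi_1|_H$ and $\pi_2|_H$ are local biholomorphisms onto open subsets of $\C$, so that $\phi=(\pi_2|_H)\circ(\pi_1|_H)^{-1}$ is a local conformal map preserving $\del\cM$ --- only makes sense when $\dim H=1$; for $n\ge 3$ the maps $\pi_i|_H\colon H\to\C$ have positive-dimensional fibers and are never locally invertible, so Luo's theorem cannot be applied on $H$ itself. The correct route, which you mention only in passing (``or use a slicing argument''), is the one the paper carries out: disintegrate the equality \eqref{tobesliced} over the projection $\pi\colon H\to\C^{n-2}$ using slicing of currents (\cite[Proposition 4.3]{Bassanelli:Berteloot}), obtaining $\pi_{n-1}|_{H_z}^*(\mu_{\cM})=\alpha\,\pi_n|_{H_z}^*(\mu_{\cM})$ on the curve fiber $H_z$ for $\pi_*T$-almost every $z$ (this requires checking that $T$ charges no pluripolar sets, that the fibers are curves a.e., and that the slice measures vary continuously in $z$); only then is one in the one-dimensional setting where your local-biholomorphism argument and Luo's theorem apply, giving $H_z$ inside the diagonal for a non-pluripolar set of $z$, hence $H\subset\pi_{(n-1,n)}^{-1}(\Delta)$ by irreducibility, and then the contradiction $H\subset\bigcap_{i\ne j}\pi_{(i,j)}^{-1}(\Delta)$ for $n\ge 3$. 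As written, your proposal asserts the key reduction without a valid justification, so the $n\ge 3$ case is not established.
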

\begin{proof}
Let $H$ be a hypersurface as in the statement defined over a number field $K$. In particular $H$ contains a generic small sequence. 
Since $H$ projects dominantly on each collection of $n-1$ coordinates, we may apply Theorem \ref{equidistribution} to deduce that the $\mathrm{Gal}(\overline{K}/K)$-orbits of our sequence equidistribute with respect to $\mu_j$ for all $j$ (for the $\mu_j$ in the statement of Theorem \ref{equidistribution}). 
In particular 
\begin{align}\label{tobesliced}
T\wedge (\pi_{n-1}|_H)^*(\mu_{\cM})= \alpha \cdot T\wedge (\pi_{n}|_H)^*(\mu_{\cM})
\end{align}
for some constant $\alpha>0$, where $T=(\pi_{1}|_H)^*(\mu_{\cM})\wedge\cdots\wedge (\pi_{n-2}|_H)^*(\mu_{\cM})$ is an $(n-2,n-2)$-current on $H$. 

For $n=2$, equation \eqref{tobesliced} means that $(\pi_{1}|_H)^*(\mu_{\cM})= \alpha\cdot (\pi_{2}|_H)^*(\mu_{\cM})$ on the curve $H$ in $\C^2$.  But the projections are locally invertible away from finitely many points, so the measure equality induces a local isomorphism between a neighborhood of a point in $\del \cM \subset \C$ and its image.  Theorem \ref{Luo} then implies that this local isomorphism is the identity.  That is, the curve $H$ must be the diagonal line in $\C^2$ as claimed. 

Assume now that $n\ge 3$. 
Let $\pi: H\to \C^{n-2}$ be the projection to the first $n-2$ coordinates.  Observe that $T=\pi^{*}(\nu)$ for the measure $\nu=p^{*}_1(\mu_{\cM})\wedge \cdots\wedge p^*_{n-2}(\mu_{\cM})$ on $\C^{n-2}$, where $p_i:\C^{n-2}\to \C$ is the projection to the $i$-th coordinate.

 By our assumption, $\pi$ is dominant and the fiber-dimension theorem yields that the fibers $H_z:=H\cap\{x_1=z_1,\ldots,x_{n-2}=z_{n-2}\}\subset H$ are curves for  $z=(z_1,\ldots,z_n)$ in a Zariski open and dense subset of $\C^{n-2}$.  Note that each $(1,1)$-current  $p_j^*\mu_{\cM}$ has continuous potentials on $\C^{n-2}$, so the measure $\nu$ does not charge pluripolar sets. Thus the fiber $H_z$ is a curve for $\nu$-almost every $z$, and, by the characterization of slicing of currents as in \cite[Proposition 4.3]{Bassanelli:Berteloot}, equation \eqref{tobesliced} implies that 
\begin{align}\label{integrals agree}
\int_{\C^{n-2}}\left(\int_{H_z}\phi \, d\pi_{n-1}|_{H_z}^* (\mu_{\cM})(x) \right)d\nu (z)=\alpha \int_{\C^{n-2}}\left(\int_{H_z}\phi \, d\pi_{n}|_{H_z}^* (\mu_{\cM})(x) \right)d\nu(z),
\end{align}
for every continuous and compactly supported function $\phi$ on $H$.  It follows that we have equality of measures 
\begin{align}\label{slices equal}
\pi_{n-1}|_{H_z}^* (\mu_{\cM}) =\alpha\cdot  \pi_n|_{H_z}^*(\mu_{\cM})
\end{align}
for $\nu$-almost every $z:=(z_1,\ldots,z_{n-2})$ in $\C^{n-2}$. In detail, suppose there exists a point $z_0$ in the support of $\nu$ where $H_{z_0}$ is a curve and such that 
$$\pi_{n-1}|_{H_{z_0}}^* (\mu_{\cM}) \neq \alpha\cdot  \pi_n|_{H_{z_0}}^*(\mu_{\cM}).$$
Then we can find a continuous function $\psi_{z_0}$ on $H_{z_0}$ so that 
$$\int_{H_{z_0}}\psi_{z_0}\pi_{n-1}|_{H_{z_0}}^* (\mu_{\cM}) \neq \alpha \int_{H_{z_0}} \psi_{z_0} \pi_n|_{H_{z_0}}^*(\mu_{\cM}).$$
Note that the measures $\pi_{i}|_{H_{z}}^* (\mu_{\cM})$ vary continuously as functions on $z$ on a neighborhood of $z_0$, by the continuity of the potentials.  We thus infer that 
$$\int_{H_{z}}\psi_{z_0}\pi_{n-1}|_{H_{z}}^* (\mu_{\cM}) \neq \alpha \int_{H_{z}} \psi_{z_0} \pi_n|_{H_{z}}^*(\mu_{\cM}),$$
for all $z$ in a small open neighborhood $U$ of $z_0$. We can therefore find $\phi= h\cdot \psi_{z_0}$ where $h$ is a continuous function supported on $U$ and for which the equality \eqref{integrals agree} fails. 

Again by Theorem \ref{Luo}, equation \eqref{slices equal} yields that $H_z$ is special for $\nu$-almost all $z$. Since $T$ does not charge pluripolar sets and since $H$ projects dominantly on each $n-1$ coordinates, we infer that $H\subset \pi^{-1}_{(n-1,n)}(\Delta)$, where $\pi_{(i,j)}:\C^n\to \C^2$ is the projection to the $i$ and $j$-th coordinates. Repeating the argument using the equalities of all measures $\mu_j$, we get 
$$H\subset \bigcap_{i\neq j\in\{1,\ldots,n\}}\pi^{-1}_{(i,j)}(\Delta).$$
But since $H$ has dimension $n-1$ and $n\ge 3$ this is impossible. This completes our proof. 
\end{proof}

\subsection{Proof of Theorem \ref{eM^n}}
We can now complete the proof of Theorem \ref{eM^n} (and so also of Theorem \ref{M^n}) by reducing it to Proposition \ref{dominant hyper}.  This argument is inspired by \cite{Ghioca:Nguyen:Ye:C}. 

First we show that if Theorem \ref{eM^n} holds for hypersurfaces $X$, then it holds in general. 
So assume that the theorem is true when $X$ is a hypersurface and let $X$ be an irreducible subvariety of $\C^n$ with dimension $d<n-1$ which contains a generic sequence of small points. 
Permuting the coordinates if necessary, we may assume that $X$ projects dominantly to the first $d$ coordinates. 
Now let $\pi_{(j)}: \C^n\to \C^{d+1}$ denote the projection to the first $d$ and the $j$th coordinates. 
Let $X_j$ denote the Zariski closure of 
$\pi_{(j)}^{-1}(\pi_{(j)}(X))$ in $\C^n$. Each $X_j$ is a hypersurface in $\C^n$ and contains a generic sequence of small points. 
Therefore by our assumption $X_j$ must be special. 
If $X\subset X_j$ is special, then our claim follows. Otherwise $X_j=\{(x_1,\ldots,x_n)\in \C^n~:~x_j=c_j\}$ for a special point $c_j$ or $X_j=\{(x_1,\ldots,x_n)\in \C^n~:~x_j=x_k\}$ for some $k\in\{1,\ldots,d\}$. 
From the precise form of each $X_j$, it is easy to see that $\cap_{j=d+1}^nX_j$ has dimension $d=\dim X$. But $X\subset  \cap_{j=d+1}^nX_j$, so we must have $X=\cap_{j=d+1}^nX_j$ and our claim follows.

Therefore, it suffices to prove Theorem \ref{eM^n} for hypersurfaces $X$. 
Arguing by induction on $n$, we may further assume that $X$ projects dominantly on each $n-1$ coordinates. 
Indeed, if $n=2$ and the curve $X$ is vertical or horizontal then since it contains a generic sequence of small points, it must be special.  Assume now that $n\ge 3$, and that $X$ does not project dominantly on, say, the last $n-1$ coordinates. Then it has the form $X=\C\times X_0$ for a hypersurface $X_0\subset \C^{n-1}$ (see e.g. \cite[Lemma 3.1]{Mavraki:Schmidt:Wilms}). By induction, Theorem \ref{eM^n} follows by Proposition \ref{dominant hyper}. As explained in Remark \ref{qbar}, Theorem \ref{M^n} also holds. 

%%%%%%%%%

%%%%%%%%

\bigskip
\section{Maximal variation and the lower bound} \label{maximal}

In this section, we provide some basic background on the Chow variety $X_d$ of curves of degree $\leq d$ in $\C^2$, and we prove lower bounds on the number of special points in families of curves.

\subsection{Chow and maximal variation} \label{max section}
Fix integer $d\geq 1$.  We will work with the Chow variety $X_d$ of algebraic curves in the plane $\C^2$, defined over the field $\C$ of complex numbers, of degree $\leq d$.  As a variety, $X_d$ is simply the complement of a single point in a projective space $\P^{N_d}_\C$, where 
	$$N_d = \begin{pmatrix} d+2 \\ 2 \end{pmatrix}  - 1 \; =\;  \frac{d(d+3)}{2}.$$
Indeed, each curve is the vanishing locus of a nonzero homogeneous polynomial $F(x,y,z)$ of degree $d$, uniquely determined up to scale, and evaluated at points of the form $(x,y,1)$ for $(x,y) \in \C^2$.  We exclude the polynomial $F(x,y,z) = z^d$.  

Let $\cC \to V$ be a family of plane algebraic curves, parameterized by an algebraic variety $V$ defined over $\C$.  There is an induced map from $V$ to $X_d$ for some degree $d$.  We say that the family $\cC \to V$ is {\bf maximally varying} if the induced map $V \to X_d$ has finite fibers.  For each integer $m\geq 1$, we let
\begin{equation}\label{m power}
	\cC_V^m := \cC \times_V \cdots \times_V\cC
\end{equation}
denote the $m$-th fiber power of $\cC$ over $V$.  There is a natural map 
	$$\rho_m:  \cC_V^m \to \C^{2m}$$
defined by sending a tuple of $m$ points $x_1, \ldots, x_m$ on a curve $C \in V$ to the $m$-tuple $(x_1, \ldots, x_m)$ in $(\C^2)^m$.   

\begin{prop}  \label{dominant}
Suppose that $V$ is an irreducible quasiprojective complex algebraic variety of dimension $\ell \geq 1$.  If $\cC \to V$ is a maximally varying family of curves in $\C^2$ of degree $\leq d$, then the natural map $\rho_m$ is dominant for all $m\leq \ell$ and generically finite for $m = \ell$.
\end{prop}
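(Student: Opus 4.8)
The plan is to reduce the whole statement to the single assertion that $\rho_\ell$ is dominant. Every fiber of $\cC\to V$ is a plane curve of degree $\le d$, hence of pure dimension $1$, and the universal family of plane curves is flat over $X_d$; so $\cC_V^m$ is equidimensional of dimension $\ell+m$. In particular $\dim\cC_V^\ell=2\ell=\dim\C^{2\ell}$, so a dominant $\rho_\ell$ is automatically generically finite. Moreover, for $m<\ell$ the forgetful morphism $\pi\colon\cC_V^\ell\to\cC_V^m$ (dropping the last $\ell-m$ marked points) is surjective, since any point of a curve $C_v$ extends to an $\ell$-tuple of points of $C_v$; it fits in a commutative square with the coordinate projection $\C^{2\ell}\to\C^{2m}$, and as that projection is surjective and $\rho_\ell$ dominant, $\rho_m$ is dominant too. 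So it suffices to show $\rho_m$ is dominant for each $m\le\ell$, which I would do by induction on $m$.

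For the base case $m=1$: if $\rho_1$ were not dominant, then $\overline{\bigcup_{v\in V}C_v}$ would be a curve $D$ (it cannot be $0$-dimensional, as each $C_v$ is a curve), so every $C_v$ would be a union of irreducible components of $D$, and $\{C_v:v\in V\}$ would be finite. But $V\to X_d$ is finite with $\dim V=\ell\ge 1$, so its image, hence $\{C_v:v\in V\}$, is infinite: contradiction.

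For the inductive step, assume $\rho_m$ dominant and $m+1\le\ell$. For $\mathbf{x}=(x_1,\dots,x_m)$ put $V_{\mathbf{x}}=\{v\in V:x_i\in C_v\text{ for }i\le m\}$; the fiber of $\rho_m$ over $\mathbf{x}$ is isomorphic to $V_{\mathbf{x}}$, so by dominance of $\rho_m$ from an $(\ell+m)$-dimensional source, $\dim V_{\mathbf{x}}=\ell-m\ge 1$ for $\mathbf{x}$ in a dense open subset of $\C^{2m}$. Suppose, for contradiction, that $\rho_{m+1}$ is not dominant. Then $Z:=\overline{\rho_{m+1}(\cC_V^{m+1})}$ is a proper subvariety of $\C^{2(m+1)}$, but it still surjects onto $\C^{2m}$ under the forgetful projection $q$ (again by surjectivity of $\cC_V^{m+1}\to\cC_V^m$). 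Hence for generic $\mathbf{x}$ the fiber $q^{-1}(\mathbf{x})$ has dimension $\le 1$; it contains $\bigcup_{v\in V_{\mathbf{x}}}C_v$, which is a nonempty union of curves, so $q^{-1}(\mathbf{x})$ has a nonempty one-dimensional part, of degree bounded independently of $\mathbf{x}$ since it is a fiber of the fixed projective morphism $q$. Therefore each $C_v$ with $v\in V_{\mathbf{x}}$ is a union of the boundedly-many irreducible components of that one-dimensional part, so $\{C_v:v\in V_{\mathbf{x}}\}$ is finite, and thus $V_{\mathbf{x}}$ lies in finitely many (finite) fibers of $V\to X_d$ and is finite. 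This contradicts $\dim V_{\mathbf{x}}\ge 1$, completing the induction and the proof.

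The hard part is the inductive step: converting ``maximal variation'' into the quantitative statement that a positive-dimensional subfamily of our curves sharing $m$ common points cannot sweep out merely a bounded-degree curve in $\C^2$. The recurring mechanism is elementary but essential---a bounded-degree plane curve has boundedly many components, so only finitely many members of the family can be contained in it, which is incompatible with the finiteness of $V\to X_d$ over a positive-dimensional base. Beyond this, one needs only routine facts (equidimensionality of a flat family of plane curves; boundedness of the degree of the fibers of a projective morphism) and some standard care with the possible reducibility of $\cC_V^m$.
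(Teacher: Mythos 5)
Your proof is correct and takes essentially the paper's route: an induction on $m$ whose engine is the same fact the paper uses, namely that the positive-dimensional subfamily $V_{\mathbf{x}}\subset V$ of curves through a generic $m$-tuple (codimension $\leq m$ in $V$) cannot consist of curves all lying inside a single plane curve, since only finitely many points of $X_d$ have zero set inside a fixed curve and $V\to X_d$ is finite --- the paper phrases this positively as dominance of $\rho_1$ over that subfamily, while you run the contrapositive against the slices of the closure of the image of $\rho_{m+1}$, and the dimension bookkeeping ($\dim\cC_V^{\ell}=2\ell$ giving generic finiteness, fiber dimension $\geq \ell-m\geq 1$) matches the paper's. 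One small cosmetic point: your aside that the slice has degree bounded independently of $\mathbf{x}$ ``because $q$ is projective'' is inaccurate ($q$ is an affine coordinate projection) but also unnecessary, since the finiteness of the irreducible components of a single generic slice already yields the contradiction.
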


\begin{proof}
The result is clear for $m = 1$, because the image of $\cC$ in $\C^2$ cannot be contained in a single algebraic curve if the image of $V$ in $X_d$ is not a point.  For $m>1$, it suffices to show that the image of $\rho_m$ contains the union of subvarieties of the form $\{(z_1, \ldots, z_{m-1})\} \times U_{(z_1, \ldots, z_{m-1})}$, where $U_{(z_1, \ldots, z_{m-1})}$ is Zariski open in $\C^2$, over a Zariski dense and open subset of points $(z_1, \ldots, z_{m-1}) \in (\C^2)^{m-1}$.   Indeed, the dominance follows because the maps are algebraic, and the generic finiteness for $m=\ell$ follows because $\dim \cC_V^m = \ell + m$.  

We proceed by induction.  We have already seen that the result holds for $m = 1$ and any $\ell\geq 1$.  Now assume $\ell > 1$, and fix $1 < m \leq \ell$.  Assume the result holds for $\rho_{m-1}$.  Then, as the smooth part $V^{sm}$ of $V$ is Zariski open and dense, the image of $\rho_{m-1}$ restricted to $\cC_V^{m-1}$ over $V^{sm}$ contains a Zariski open set $U \subset \C^{2(m-1)}$.  Choose any point $(z_1, \ldots, z_{m-1})$ in $U$.  Suppose that $\lambda_0 \in V^{sm}$ is a parameter for which $C_{\lambda_0}$ contains the points $z_1, \ldots, z_{m-1}$.  There is a subvariety $V_1$ of $V$ containing $\lambda_0$ and with codimension $\leq m-1$ consisting of curves $C_\lambda$ that persistently contain the points $z_1, \ldots, z_{m-1}$.  In particular, the dimension of $V_1$ is at least 1.  Maximal variation implies that the image of $\rho_1$ on $\cC$ over $V_1$ is dominant to $\C^2$.  It follows that $\rho_m$ on $\cC_{V_1}^m$ is dominant to $\{(z_1, \ldots, z_{m-1})\} \times \C^2$.  Letting the point $(z_1, \ldots, z_{m-1})$ vary over the image of $\rho_{m-1}$, we use the induction hypothesis to see that $\rho_m$ is dominant from $\cC_V^m$ to $\C^{2m}$.
\end{proof}

\subsection{A lower bound on the number of special points}
For a family of plane algebraic curves $\cC\to V$ parameterized by $V$, recall the definition of $\cC_V^m$ in \eqref{m power}. 

\begin{prop} \label{lower bound} 
Suppose that $\cC \to V$ is a maximally varying family of irreducible, complex algebraic curves in $\C^2$, over an irreducible quasiprojective complex algebraic variety $V$ of dimension $\ell \geq 1$.  Then the preimage $\rho_\ell^{-1}(S)$ of the set of special points $S \subset \C^{2\ell}$ is Zariski dense in $\cC_V^{\ell}$.  In particular, there is a Zariski dense set of curves $\lambda \in V$ for which the fiber $C_\lambda$ contains at least $\ell$ distinct special points of $\C^2$. \end{prop}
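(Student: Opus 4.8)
The plan is to deduce the proposition formally from Proposition~\ref{dominant} together with the classical fact that PCF parameters are Zariski dense in $\C$. First I would record that there are infinitely many PCF parameters in $\C$ (for instance the centres of hyperbolic components, or the Misiurewicz points, of $\cM$), and that an infinite subset of the affine line is Zariski dense; an elementary induction on the number of coordinates then shows that the set $S\subset\C^{2\ell}$ of special points --- those all of whose $2\ell$ coordinates are PCF --- is Zariski dense in $\C^{2\ell}$.

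Next I would note that $\cC_V^{\ell}$ is irreducible of dimension $2\ell$: since $\cC\to V$ has geometrically irreducible fibres over the irreducible base $V$, every fibre power $\cC_V^m$ is irreducible, and $\dim\cC_V^{\ell}=\dim V+\ell=2\ell$. By Proposition~\ref{dominant} the map $\rho:=\rho_\ell\colon\cC_V^{\ell}\to\C^{2\ell}$ is dominant and generically finite, so by Chevalley's theorem its image contains a Zariski-dense open subset $U\subset\C^{2\ell}$. Now suppose, for contradiction, that $\rho^{-1}(S)$ is not Zariski dense in $\cC_V^{\ell}$, and let $Z$ be its Zariski closure. Then $Z$ is a proper closed subset of the irreducible variety $\cC_V^{\ell}$, so $\dim Z\le 2\ell-1$, whence $W:=\overline{\rho(Z)}$ has dimension $\le 2\ell-1$, i.e. is a proper closed subset of $\C^{2\ell}$. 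On the one hand $\rho(\rho^{-1}(S))=S\cap\rho(\cC_V^{\ell})\subseteq\rho(Z)\subseteq W$; on the other hand $S\setminus\rho(\cC_V^{\ell})\subseteq\C^{2\ell}\setminus U=:W'$, again a proper closed subset. Thus $S\subseteq W\cup W'$, a proper closed subset of the irreducible variety $\C^{2\ell}$, contradicting the Zariski density of $S$. Hence $\rho_\ell^{-1}(S)$ is Zariski dense in $\cC_V^{\ell}$.

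For the final assertion I would remove the big diagonal $D:=\bigcup_{1\le i<j\le\ell}\{x_i=x_j\}\subset\cC_V^{\ell}$. Since the generic fibre of $\cC\to V$ is an irreducible curve, hence has more than one point, each $\{x_i=x_j\}$ --- and so also $D$ --- is a proper closed subset of $\cC_V^{\ell}$, and therefore $\rho_\ell^{-1}(S)\setminus D$ is still Zariski dense in $\cC_V^{\ell}$ by irreducibility. The structure morphism $q\colon\cC_V^{\ell}\to V$ is dominant, so $q(\rho_\ell^{-1}(S)\setminus D)$ is Zariski dense in $V$; and every point of this set is a parameter $\lambda$ for which $C_\lambda$ contains $\ell$ pairwise-distinct special points of $\C^2$.

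The substantive input is entirely contained in Proposition~\ref{dominant}: once the maps $\rho_m$ are known to be dominant and generically finite, the rest is formal, using only Chevalley's theorem and the density of PCF parameters in $\C$. The only points that require genuine care --- and where I would be most wary of a gap --- are the irreducibility of the fibre powers $\cC_V^{\ell}$ (so that passing to Zariski closures and deleting the diagonal behave as expected) and the bookkeeping ensuring the various closed sets stay proper.
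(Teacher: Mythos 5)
Your proof is correct and takes essentially the same approach as the paper: both deduce the statement from Proposition \ref{dominant} together with the Zariski density of the special points in $\C^{2\ell}$, concluding that $\rho_\ell^{-1}(S)$ is dense and that the $\ell$ marked points are generically pairwise distinct. The only difference is that you spell out details the paper leaves implicit (constructibility of the image, irreducibility of the fibre power $\cC_V^{\ell}$, and removal of the big diagonal before projecting to $V$).
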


\begin{proof}
By maximal variation, we know that the fiber product $\cC_V^\ell$ maps generically finitely and dominantly by $\rho_\ell$ to $\C^{2\ell}$.  The special points are Zariski dense in the image.  This implies that the set of points $P = (\lambda, x_1, \ldots, x_\ell) \in \rho_\ell^{-1}(S) \subset \cC_V^\ell$, where $\lambda \in V$ and $\{x_1, \ldots, x_\ell\}$ is a collection of special points on the fiber $C_\lambda$ of $\cC$ over $\lambda$, is Zariski dense in $\cC_V^\ell$.  In particular, the $x_i$'s must be generally all distinct.
\end{proof}

%%%%%%%%%

\bigskip
\section{Optimal general upper bounds}
\label{upper}

In this section we prove Theorems \ref{uniformChow}, \ref{optimal}, \ref{sharp}, and \ref{lines}.
For each integer $d\geq 1$, let $X_d$ denote the Chow variety of all algebraic curves in $\C^2$ of degree $\leq d$ defined over $\C$. 

\subsection{The uniform bound of Theorem \ref{uniformChow}}
Let $\cC \to V$ denote a family of algebraic curves in $\C^2$, parameterized by an irreducible, quasiprojective variety $V$ over $\C$ of dimension $\ell \geq 1$, for which the general curve in the family is irreducible.  As introduced in \S\ref{max section}, there is a natural map 
	$$\rho_1: \cC \to \C^2,$$ 
sending each curve to its image in $\C^2$.  Recall the definitions of $\cC_V^m$ and 
	$$\rho_m: \cC_V^m \to \C^{2m}$$ 
given there, for each integer $m\geq 1$.  Recall also that the family is maximally varying if the induced map $V \to X_d$ has finite fibers.  From Proposition \ref{dominant}, we know that maximal variation implies that the maps $\rho_m$ are dominant for all $m \leq \dim V$.  

\begin{prop} \label{not special M}
Suppose that $\cC \to V$ is a maximally varying family of curves in $\C^2$ with $\ell = \dim V > 0$.  Assume the general curve in the family is irreducible.  Then the Zariski-closure of the image in $\C^{2(\ell+1)}$ of the fiber power $\cC_V^{\ell +1}$ by $\rho_{\ell+1}$ is not special, unless $\ell =1$ and $\cC$ is a family of horizontal or vertical lines in $\C^2$.
\end{prop}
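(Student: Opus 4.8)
The plan is to argue by contradiction: suppose the Zariski closure $Z$ of $\rho_{\ell+1}(\cC_V^{\ell+1})$ in $\C^{2(\ell+1)}$ is special, and deduce that $\ell = 1$ and $\cC$ is a family of horizontal or vertical lines. First I would invoke Proposition \ref{dominant}: since the family is maximally varying, $\rho_{\ell+1}$ is dominant onto each of the $\ell+1$ factors of $\C^2$ (in fact onto the whole $\C^{2(\ell+1)}$ when restricted appropriately), so $Z$ projects dominantly onto every coordinate of $\C^{2(\ell+1)}$; more importantly $\rho_\ell$ is dominant onto $\C^{2\ell}$, so $Z$ projects dominantly onto the $\C^{2\ell}$ spanned by the first $\ell$ points. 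Now consult the explicit description of special subvarieties from Section \ref{n special}: $Z$ is cut out by a partition of the $2(\ell+1)$ coordinates into blocks, with some coordinates frozen at PCF values $c_i$ and coordinates within a block forced equal. The coordinates come in pairs $(x_k, y_k)$, $k = 1, \ldots, \ell+1$, recording the $k$-th point on the curve.

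The key observation is that $Z$ has dimension $\ell + 1$ (since $\rho_{\ell+1}$ is generically finite on $\cC_V^{\ell+1}$, which has dimension $\ell + 1$), so in the notation for special subvarieties the partition has exactly $r = \ell+1$ nontrivial blocks $S_1, \ldots, S_{\ell+1}$ and $S_0$ may be nonempty. Because $Z$ projects dominantly onto the first $2\ell$ coordinates $(x_1, y_1, \ldots, x_\ell, y_\ell)$, none of those $2\ell$ coordinates can lie in $S_0$, and no two of them can lie in the same block $S_k$ — otherwise that projection would be contained in a proper subvariety (a hyperplane $\{$coordinate $= c_i\}$ or a diagonal), contradicting dominance. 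Hence the $2\ell$ coordinates $x_1, y_1, \ldots, x_\ell, y_\ell$ are distributed one per block among the $\ell+1$ blocks, and the remaining slots in those blocks, together with one entire block, must be filled by the two coordinates $x_{\ell+1}, y_{\ell+1}$ and possibly absorbed into $S_0$. A counting argument then forces a very rigid picture: essentially, $x_{\ell+1}$ and $y_{\ell+1}$ must each be tied by a diagonal relation to one of the earlier coordinates, and since at most two earlier coordinates get a second partner, we get at most two nontrivial diagonal constraints on $\cC_V^{\ell+1}$.

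Next I would translate these constraints back to the curve family. A relation of the form $x_{\ell+1} = x_j$ (or $= y_j$, or $x_{\ell+1} = c_i$) holding on $Z$ means: for a generic curve $C_\lambda$ in the family and generic points $p_1, \ldots, p_\ell$ on it, every point $p_{\ell+1}$ on $C_\lambda$ has its first coordinate equal to the first coordinate of $p_j$ — but $p_{\ell+1}$ ranges over all of $C_\lambda$ while $p_j$ is fixed, which forces $C_\lambda$ to be a vertical line $\{x = $ const$\}$. Similarly $y_{\ell+1} = $ something forces $C_\lambda$ horizontal. A curve that is simultaneously a vertical and a horizontal line is impossible, and a curve with no constraint at all on $(x_{\ell+1}, y_{\ell+1})$ is incompatible with $\dim Z = \ell+1$ and the dominance onto the first $2\ell$ coordinates (the fiber over a generic point of $\C^{2\ell}$ would have to be $2$-dimensional). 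So $C_\lambda$ is forced to be a line — vertical or horizontal — and then maximal variation plus $\ell = \dim V$ pins down $\ell = 1$: the family of all vertical lines and the family of all horizontal lines are each $1$-dimensional, and these are exactly the exceptional cases in the statement.

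\textbf{Main obstacle.} The delicate part is the combinatorial bookkeeping in the second paragraph: showing that dominance onto the first $2\ell$ coordinates, together with $\dim Z = \ell+1$, forces the partition to have the specific shape that produces exactly a vertical-or-horizontal constraint on the last point, and ruling out mixed configurations (e.g. one block containing $x_j$ and $y_k$ with $j \neq k$, which would impose a relation $x_j = y_k$ visible on the first $2\ell$ coordinates and hence violate dominance — but one must check this carefully against all the ways $S_0$ and the blocks can interact). I would organize this as a short lemma on which partitions of $\{1,\ldots,2(\ell+1)\}$ are compatible with the two projection-dominance hypotheses, and then read off the geometric conclusion. The rest — Proposition \ref{dominant} for dominance, and the "vertical line forced" argument — is routine given the earlier results.
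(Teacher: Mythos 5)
Your overall strategy is close in spirit to the paper's: use Proposition \ref{dominant} to see that no special relation can hold among the coordinates of the first $\ell$ points, conclude that any relation must involve the last pair $(x_{\ell+1},y_{\ell+1})$, and observe that such a relation forces every curve in the family to be a vertical or horizontal line, whence $\ell=1$. However, your ``key observation'' contains a genuine error: $\cC_V^{\ell+1}$ has dimension $\ell+(\ell+1)=2\ell+1$, not $\ell+1$ (the base has dimension $\ell$ and each fiber is the $(\ell+1)$-fold product of a curve), so the Zariski closure $Z$ of $\rho_{\ell+1}(\cC_V^{\ell+1})$ has dimension $2\ell+1$. The combinatorial picture you build on the wrong count then collapses: a special $Z$ of dimension $2\ell+1$ in $\C^{2\ell+2}$ would correspond to $r=2\ell+1$, i.e.\ $Z$ would be a single special hypersurface $\{x_i=c_i\}$ or $\{x_i=x_j\}$, not a partition with exactly $\ell+1$ nontrivial blocks; and your assertion that the $2\ell$ coordinates of the first $\ell$ points are ``distributed one per block among the $\ell+1$ blocks'' is impossible as soon as $\ell\geq 2$, since there are more such coordinates than blocks. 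So the ``counting argument'' you defer to an unproved lemma cannot be carried out as described, and that lemma is precisely where you located the substance of the proof.

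The good news is that once the dimension is corrected the bookkeeping you worried about essentially disappears. Since $Z$ is a hypersurface, being special means exactly one relation. The paper handles $\ell>1$ by noting that every two-coordinate projection $\pi_{ij}$ of $\cC_V^{\ell+1}$ is dominant (forget a factor not containing $i,j$ and apply $\rho_\ell$), so no relation $\{x_i=c_i\}$ or $\{x_i=x_j\}$ can hold at all; for $\ell=1$ it checks the finitely many possible relations on $\rho_2(\cC_V^2)$ directly, where only $\{x_1=x_3\}$ (vertical lines) and $\{x_2=x_4\}$ (horizontal lines) survive. Two further points to repair in your sketch: a frozen relation $x_{\ell+1}=c$ does not merely force each curve to be vertical, it forces every curve to be the \emph{same} vertical line $\{x=c\}$, contradicting maximal variation outright (so it is not an exceptional case); and you should also rule out the relation $x_{\ell+1}=y_{\ell+1}$, which would force every curve to be the diagonal and again contradicts maximal variation with $\dim V\geq 1$. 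With $\dim Z=2\ell+1$ in hand and these cases added, your argument becomes a correct variant of the paper's proof; as written, it has a concrete gap.
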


As a consequence we have:

\begin{theorem} \label{uniform any}
For any family $\cC \to V$ of curves in $\C^2$ -- irreducible or not, maximally varying or not -- there is a uniform upper bound $M = M(\cC)$ on the number of special points on $C_\lambda$, for all $\lambda \in V$ over which the fiber $C_\lambda$ of $\cC$ has no special irreducible components.
\end{theorem}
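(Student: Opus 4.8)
The plan is to deduce Theorem~\ref{uniform any} from Proposition~\ref{not special M} by a noetherian-induction / stratification argument on the parameter space $V$. First I would reduce to the case where $V$ is irreducible and every curve in the family is irreducible: given an arbitrary family $\cC \to V$, stratify $V$ into finitely many irreducible locally closed pieces on each of which the number of irreducible components of $C_\lambda$ and their degrees are constant, pass to the normalization of each component of the relative curve, and restrict to the (still constructible, hence a finite union of locally closed subvarieties) locus where all components are irreducible; since a uniform bound on each stratum of a finite stratification yields a uniform bound overall, and the special components are excluded by hypothesis so only finitely many components of each $C_\lambda$ can carry special points, it suffices to bound special points on a single irreducible family of irreducible curves.

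Next, on such a family $\cC \to V$ with $\ell = \dim V$, I would further reduce to the \emph{maximally varying} case by replacing $V$ with its image in the Chow variety $X_d$ (for $d$ the generic degree): the fibers of $V \to X_d$ are families in which the curve $C_\lambda$ is literally constant, so if $V \to X_d$ is not finite we can stratify $V$ so that on each stratum the map to $X_d$ is either finite (maximally varying) or factors through a lower-dimensional base with constant fiber-curve — in the latter case a single curve $C$ has some fixed finite number $\#\{\text{special points on }C\}$ of special points, trivially a uniform bound. So after finitely many such reductions we may assume $\cC \to V$ is maximally varying, irreducible, with irreducible general fiber and $\ell = \dim V \ge 1$.

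Now the core step: suppose for contradiction that there is a Zariski-dense set of $\lambda \in V$ with $C_\lambda$ containing at least $\ell + 1$ special points. Then the fiber power $\cC_V^{\ell+1}$ contains, over this dense set, tuples of $\ell+1$ special points on the corresponding curves, so the preimage $\rho_{\ell+1}^{-1}(S)$ of the special locus $S \subset \C^{2(\ell+1)}$ is Zariski dense in $\cC_V^{\ell+1}$. Since special points are Zariski dense in the special locus and, by Theorem~\ref{M^n} (equivalently its restatement via special subvarieties), the special points lying in any subvariety $W \subset \C^{2(\ell+1)}$ are Zariski dense in $W$ only if $W$ is special, it follows that the Zariski closure of $\rho_{\ell+1}(\cC_V^{\ell+1})$ is special. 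This contradicts Proposition~\ref{not special M}, except in the excluded case $\ell = 1$ with $\cC$ a family of horizontal or vertical lines; but a horizontal line $\{y = c_2\}$ contains a special point only when $f_{c_2}$ is PCF, in which case every point of the form $(c_1, c_2)$ with $f_{c_1}$ PCF is special — however such a line is then a \emph{special component}, excluded by hypothesis, so over the relevant locus of $V$ the number of special points is $0$. Hence in all cases $\#\{\text{special points on }C_\lambda\} \le \ell$ for all but a Zariski-closed proper subset of $\lambda \in V$; applying noetherian induction to that proper subvariety (whose dimension has strictly dropped) produces the desired uniform bound $M(\cC)$ after finitely many steps.

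The main obstacle I anticipate is the bookkeeping in the first two reduction steps — making sure that ``family of curves, irreducible or not, maximally varying or not'' really does decompose into finitely many pieces each of which is either maximally varying with irreducible general fiber (where Proposition~\ref{not special M} applies) or has essentially constant fiber-curve (where the bound is trivial), and that the exclusion of special \emph{components} is correctly propagated through normalization and stratification so that it is exactly the hypothesis needed to kill the horizontal/vertical exceptional case. The dynamical input (Theorem~\ref{M^n}) and the geometric input (Proposition~\ref{not special M}) do all the real work; the remaining labor is purely a noetherian-induction argument, routine in spirit but requiring care to keep the induction on $\dim V$ well-founded while the stratification stays finite at each stage.
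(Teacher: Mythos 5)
There is a genuine gap in your core step, and in fact the statement you try to prove there is false. From ``a Zariski-dense set of $\lambda\in V$ with at least $\ell+1$ special points on $C_\lambda$'' you conclude that $\rho_{\ell+1}^{-1}(S)$ is Zariski dense in $\cC_V^{\ell+1}$. This cannot be right: over every $\lambda$ whose fiber has no special component, $C_\lambda$ meets only finitely many special points of $\C^2$ (Theorem \ref{M^2}), so the fiber of $\rho_{\ell+1}^{-1}(S)$ over such $\lambda$ is finite and the closure of $\rho_{\ell+1}^{-1}(S)$ has dimension at most $\ell$, far below $\dim \cC_V^{\ell+1}=2\ell+1$. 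Concretely, Example \ref{pencil} (the pencil of lines through a special point $P$, so $\ell=1$) has a Zariski-dense set of $\lambda$ with $2=\ell+1$ special points, yet the closure of the special pairs in $\cC_V^{2}$ is the union of the relative diagonal and the two surfaces of pairs containing $P$, a proper $2$-dimensional subvariety of the $3$-fold $\cC_V^{2}$, and the closure of $\rho_2(\cC_V^2)$ is not special. So your argument would ``prove'' that all but finitely many lines through $P$ contain at most $1$ special point, which is false; equivalently, the bound $\le \ell$ off a proper closed subset that you assert at the end is not a true statement for arbitrary maximally varying families (the paper points this out explicitly before Proposition \ref{prop M lines}), and the theorem only claims some uniform bound $M(\cC)$, not $\ell$.

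The missing idea is how the paper upgrades the contradiction hypothesis to Zariski density of special points in the \emph{image} $\rho_m(\cC_V^m)$. One must negate the existence of a uniform bound itself: take a generic sequence $C_n\in V$ with more than $n$ special points. If the Zariski closure $Z_m$ of the special points in $\rho_m(\cC_V^m)$ were proper, then $\rho_m^{-1}(Z_m)$ sits inside a family of hypersurfaces $H\subset \cC_V^m$ over $V$ that is symmetric under permuting the $m$ factors (because the tuples built from the special points of $C_n$ are permutation-symmetric); by this symmetry the forgetful projections $H\to\cC_V^{m-1}$ are generically finite of one common degree $r(m)$, which bounds the number of special points on the general curve and contradicts the unboundedness along $\{C_n\}$. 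Only after this step does one know the special points are dense in $\rho_{\ell+1}(\cC_V^{\ell+1})$, so that Theorem \ref{M^n} makes its closure special and Proposition \ref{not special M} gives the contradiction. Your reductions to irreducible, maximally varying families, your treatment of the horizontal/vertical-line case via the ``no special components'' hypothesis, and the noetherian induction on $V\setminus U$ all match the paper and are fine; the proof fails precisely at the density claim, and repairing it requires both the stronger (unboundedness) hypothesis and the symmetry/generic-finiteness argument above.
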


\begin{proof}[Proof of Theorem \ref{uniform any}]
Assume Proposition \ref{not special M}.  If $V$ or the generic curve is reducible, we work with irreducible components.  If the family fails to be maximally varying, it is convenient to factor through the image of $V$ in the Chow variety $X_d$ for some degree $d$.  So we now assume that $V$ is an irreducible subvariety in $X_d$ of dimension $\geq 1$ and the associated curve family $\cC\to V$ consists of generally irreducible curves and not exclusively of horizontal or vertical lines.

Consider the fiber powers $\cC_V^m \to V$ for each $m\geq 1$.  Suppose there is a generic sequence of points $C_n \in V$ for which the number of special points of the curves $C_n \subset \C^2$ is larger than $n$, for each $n\in \N$.  This implies that the special points of $\C^{2m}$ are Zariski dense in the images $\rho_m(\cC_V^m)$ for every $m\geq 1$.  Indeed, this is clear for $m=1$ because $\rho_1$ maps $\cC$ dominantly to $\C^2$.  For each positive integer $m\geq 2$, the set of special points in $\rho_m(\cC_V^m)$ includes the $m$-tuples formed from the $n$ distinct special points on $C_n$; note that this set of $m$-tuples in $\C^{2m}$ is symmetric under permutation of the $m$ copies of $\C^2$.  Let $Z_m$ be the Zariski closure of these special points within $\rho_m(\cC_V^m)$; note that $Z_m$ will also be symmetric under permutation of the $m$ copies of $\C^2$.  Because $\{C_n\}$ is a generic sequence in $V$, note also that $\rho_m^{-1}(Z_m)$ must project dominantly to $V$.  If $Z_m$ is not equal to all of $\rho_m(\cC_V^m)$, then $\rho_m^{-1}(Z_m)$ is contained in a subvariety $H \subset \cC_V^m$ which is a family of hypersurfaces over $V$ that are symmetric with respect to permutation of the components in each fiber $C\times \cdots \times C$.  Now consider the projections from $\cC_V^m \to \cC_V^{m-1}$ forgetting one factor, restricted to the hypersurface $H$.  By the symmetry of $H$, each of these projections is generically finite and of the same degree, say $r(m)$.  So over a Zariski open subset of $V$, this bounds the number of points on a given curve $C$; in particular this contradicts the assumption on the sequence of curves $C_n$.  So the special points of $\C^{2m}$ must be Zariski dense in $\rho_m(\cC_V^m)$ for every $m\geq 1$.

From Theorem \ref{M^n}, the density of special points in $\rho_m(\cC_V^m)$ implies that (the Zariski closure) of $\rho_m(\cC_V^m)$ is special.  But taking $m = \dim V + 1$, this contradicts Proposition \ref{not special M}.

So there is a uniform bound on the number of special points in the curve $C \subset \C^2$ for all curves $C$ in a Zariski open subset $U$ of $V$.  We then repeat the argument on each of the finite set of irreducible components of $V\setminus U$.  We continue until we are left with families of vertical or horizontal lines.  
\end{proof}

\begin{proof}[Proof of Proposition \ref{not special M}]
From Proposition \ref{dominant}, we know that the map $\rho_m: \cC_V^m \to \C^{2m}$ is dominant for all $m \leq \ell$ and generically finite for $m = \ell$.  Note that $\dim \cC_V^{\ell+1} = 2\ell+1 < 2\ell + 2$, so the map 
	$$\rho_{\ell+1}: \cC_V^{\ell+1} \to \C^{2(\ell+1)}$$ 
cannot be dominant.  Consider the projections $\pi_{ij}$ from $\cC_V^{\ell+1}$ to $\C^2$ defined by composing $\rho_{\ell+1}$ with 
	$$(x_1, \ldots, x_{2\ell+2}) \mapsto (x_i,x_j)$$
for each pair $1 \leq i < j \leq 2\ell+2$.

Assume $\ell > 1$. The projections $\pi_{ij}$ are dominant for all pairs $i < j$, because they factor through the dominant maps $\cC_V^{\ell+1} \to \cC_V^{\ell} \to \C^{2 \ell}$, where the first arrow forgets the $k$-th factor of $\cC$ over $V$ for some choice of indices $\{2k-1, 2k\}$ not containing $i$ or $j$, and the second arrow is $\rho_\ell$.  In view of the structure of special subvarieties from Theorem \ref{M^n}, we see immediately that $\rho_{\ell+1}(\cC_V^{\ell + 1})$ cannot be special in $\C^{2\ell+2}$.  

Now suppose that $\ell = 1$, and assume that $\cC$ is not a family of vertical or horizontal lines.  We aim to show that $\rho_2(\cC_V^2)$ is not a special hypersurface in $\mathbb{C}^4$.  Note that a general curve $C$ in the family $\cC$ projects dominantly to both coordinates in $\C^2$. It follows that $\rho_2(\cC_V^2)$ cannot be contained in the hyperplane $\{x_i = c_i\}$ for a special parameter $c_i$ and any $i \in \{1, 2, 3, 4\}$.  Because the curves over $V$ are not all equal to the diagonal line in $\C^2$, the space $\rho_2(\cC_V^2)$ also cannot be contained in the special hypersurfaces defined by $\{x_k =  x_{k+1}\}$ for $k \in \{1,3\}$.  Recalling the definition of special subvarieties, it remains to check that $\rho_2(\cC_V^2)$ does not lie in any of the hypersurfaces $\{x_1 = x_3\}$, $\{x_1=x_4\}$, $\{x_2=x_3\}$, or $\{x_2 = x_4\}$.  But for a general choice of curve $C$ in the family, the product $C \times C \subset \C^4$ maps dominantly to the spaces of pairs with coordinates $(x_1, x_3)$, $(x_2, x_4)$, $(x_2,x_3)$, or $(x_1,x_4)$.  This proves that $\rho_2(\cC_V^2)$ is not special.

Finally suppose that $\cC$ is a family of vertical or horizontal lines.  For concreteness, we can take $V = \C$ and $\lambda \in V$ corresponding to the vertical line $\{x = \lambda\}$ for $\lambda \in V$.  Then the image of $\cC_V^2$ in $\C^4$ is the set of all 4-tuples $(\lambda, x_2, \lambda, x_4)$ for any $(\lambda, x_2, x_4) \in \C^3$.  In other words, the image of $\cC_V^2$ is the special hypersurface defined by $\{x_1 = x_3\}$.  Similarly for families of horizontal lines.
\end{proof}

\begin{proof}[Proof of Theorem \ref{uniformChow}]
The theorem is an immediate consequence of Theorem \ref{uniform any}, taking $V = X_d$.
\end{proof}

\subsection{Optimal general bound over Chow; proof of Theorems \ref{optimal} and \ref{sharp}}  \label{proof of optimal}
Let $V = X_d$ be the Chow variety of all affine curves of degree $\leq d$ in $\C^2$ and $\cC\to V$ the universal family of such curves.  Recall from \S\ref{max section} that 
	$$N_d := \dim V  =  \frac{d(d+3)}{2}.$$ 
Consider the fiber power $\cC_V^{N_d+1} \to V$ and its image under the natural map
	$$\rho := \rho_{N_d+1} : \cC_V^{N_d+1} \to \C^{2(N_d+1)}.$$
	
Suppose that $S$ is a special subvariety of $\C^{2(N_d+1)}$ that is contained in the Zariski closure of the image $\rho(\cC_V^{N_d+1})$ for which $\rho^{-1}(S)$ projects dominantly to $V$.  We aim to show that $S$ must lie in the union of special diagonals
\begin{equation} \label{Dij}
	\Delta_{i,j} := \{(x_1, \ldots, x_{2N_d+2}) \in \C^{2N_d+2}:  (x_i, x_{i+1}) = (x_j, x_{j+1})\}
\end{equation}
for odd integers $i$ and $j$ satisfying $1\leq i < j \leq 2N_d +1$.  

This classification will imply the two theorems.  Indeed, if there were a generic sequence of elements $C_n \in V$ for which the curve $C_n \subset \C^2$ contains at least $N_d+1$ distinct special points of $\C^2$, then the $(N_d+1)$-tuples of such points will be special in $\C^{2(N_d+1)}$ and will lie outside of the special diagonals $\Delta_{i,j}$.  From Theorem \ref{M^n}, each irreducible component $Z$ of the Zariski closure of these special points in $\C^{2(N_d+1)}$ is itself a special subvariety, and by construction is contained in the closure of $\rho(\cC_V^{N_d+1})$.  As $\{C_n\}$ is a generic sequence of points in $V$, the preimage $\rho^{-1}(Z)$ of each component will project dominantly to $V$.  In other words, this $Z$ is a special subvariety of the type described, but not contained in the special diagonals $\Delta_{i,j}$, leading to a contradiction.  This will prove Theorem \ref{optimal}.  The equality of Theorem \ref{sharp} is then a consequence of the lower bound in Proposition \ref{lower bound}.

For the proof, suppose that $S$ is a special subvariety of $\C^{2(N_d+1)}$ that is contained in the Zariski closure of the image $\rho(\cC_V^{N_d+1})$ for which $\rho^{-1}(S)$ projects dominantly to $V$. Recall that our goal is to show that $S$ must lie in the union of the special diagonals $\Delta_{i,j}$. We begin with a few important observations.  First, note that $\dim \rho^{-1}(S) \geq \dim S$, so the dominance of the projection to $V$ implies that a general fiber of this projection has dimension $\geq \dim S - N_d$.  In other words, the intersection of $S$ with $C \times \cdots \times C$ in $\C^{2(N_d+1)}$ has dimension at least 
	$$\dim S - N_d = N_d + 2 - \codim S,$$
for a general curve $C$ in $V$.  Moreover, as the image $\rho(\cC_V^{N_d+1})$ is not itself special in $\C^{2(N_d+1)}$ by Proposition \ref{not special M}, we have that $S\subsetneq \overline{\rho(\cC_V^{N_d+1})} \subsetneq \C^{2(N_d+1)}$. Therefore, the codimension of $S$ in $\C^{2(N_d+1)}$ will be at least 2.  We begin by working case by case through some examples of special subvarieties, as classified in Theorem \ref{M^n}, to see that they either cannot be contained in $\overline{\rho(\cC_V^{N_d+1})}$ or that $\rho^{-1}(S)$ cannot project dominantly to $V$, unless $S$ is contained in one of the special diagonals. We then handle the general case.

\begin{itemize}
\item $S = \{x_1 = c_1 \mbox{ and } x_2 = c_2\}$:  a general curve $C\in V$ does not pass through the point $(c_1, c_2)\in \C^2$, so $\rho^{-1}(S)$ cannot project dominantly to $V$. 

\item $S = \{x_1 = c_1 \mbox{ and } x_3 = c_3\}$:  A general curve in $\C^2$ of degree $d$ projects dominantly to its 1st coordinate, so $\rho^{-1}(S)$ does project dominantly to $V$ in this case.  However, the intersection of $S$ with a general fiber $C \times \cdots \times C \subset \C^{2N_d+2}$ will have dimension only $N_d-1 < \dim S-N_d$, so this $S$ could not have been contained in the closure of $\rho(\cC_V^{N_d+1})$.  

%A similar argument applies if we specify a single coordinate for each of $m$ points, for any $2 \leq m \leq N_d+1$.  

\item $S = \{x_1 = c_1 \mbox{ and } x_2 = x_3\}$:  Again this $S$ has codimension 2, while the intersection with $C\times \cdots \times C$ for a general curve $C \in V$ has dimension only $N_d-1$.  

\item $S= \Delta_{1,3} = \{x_1 = x_3 \mbox{ and } x_2 = x_4\}$:  These relations again impose conditions on two of the $N_d+1$ components of $C \times \cdots \times C$.  But note that any collection of $N_d+1$ points $(x_1, x_2), \ldots, (x_{2N_d+1}, x_{2N_d+2})$ in $\C^2$ satisfying $(x_1,x_2) = (x_3, x_4)$ will lie on some curve of degree $d$, because at most $N_d$ of the points are distinct.  So all of $\Delta_{1,3}$ is contained in $\rho(\cC_V^{N_d+1})$.  The intersection of $\Delta_{1,3}$ with a general $C \times \cdots \times C$ has dimension $N_d$, which is the expected dimension.  

\item $S = \{x_1 = c_1 \mbox{ and } x_2 = x_3 = x_4\}$:  Again we impose relations on only two of the $N_d+1$ components of $C \times \cdots \times C$, but there are too many relations; a general curve will not intersect both $(c_1, y)$ and $(y, y)$ for any choice of $y \in \C$.  Consequently, the preimage $\rho^{-1}(S)$ in $\cC_V^{N_d+1}$ will not project dominantly to $V$. 

\item $S = \{x_1 = c_1 \mbox{ and } x_2 = x_3 \mbox{ and }  x_4 = c_4\}$:  These three relations are imposed upon only two of the $N_d+1$ components of $C \times \cdots \times C$.  As in the previous example, a general curve $C$ will not intersect both $(c_1, y)$ and $(y, c_4)$ for any choice of $y \in \C$.  The preimage $\rho^{-1}(S)$ in $\cC_V^{N_d+1}$ will not project dominantly to $V$. 
\end{itemize}

In general, recall that since $S$ is special it is defined by imposing `special relations' of the form $x_i=c_i$ for a PCF parameter $c_i$ or $x_k=x_{\ell}$ for $i,k,\ell\in\{1,\ldots,2N_d+2\}$.  In general, we see that if we define $S$ by imposing up to $N_d+1$ special relations on the coordinates of the $N_d+1$ components of a general product $C\times \cdots \times C$ in $\C^{2(N_d+1)}$, as long as no one point is constant (as in the first example) nor that there are three relations on coordinates of two components (as in the last two examples), nor that two of the coordinates are required to agree (so as to be a subvariety of a special diagonal $\Delta_{i,j}$), then the preimage $\rho^{-1}(S)$ in $\cC_V^{N_d+1}$ would project dominantly to $V$, but the general intersection of $S$ with $C \times \cdots \times C$ will not have sufficiently large dimension.  That is, this $S$ will not lie in $\overline{\rho(\cC_V^{N_d+1})}$ in $\C^{2(N_d+1)}$.  If we specify that one of the components is a fixed special point, or if there are at least three relations imposed upon a pair of components of $C\times \cdots \times C$ (as will be the case if $\codim S > N_d+1$), then the preimage $\rho^{-1}(S)$ will not project dominantly to $V$.  
%The unique exception is when the special variety lies in one of the diagonals $\Delta_{i,j}$.  
This completes the proofs of Theorems \ref{optimal} and \ref{sharp}.

\subsection{Lines and the proof of Theorem \ref{lines}}  \label{M lines}
The classification of special subvarieties (Theorem \ref{M^n}) and the proof strategy in \S\ref{proof of optimal} for Theorem \ref{optimal} suggest that the general curve in a ``generically chosen" maximally-varying family $\cC \to V$ of curves in $\C^2$ with dimension $\ell = \dim V$, in any degree, will intersect at most $\ell$ distinct special points.  Here we show this is indeed the case in degree $d = 1$.  Before doing so, we give an example where this expectation fails.

\begin{example}\label{pencil} [An exceptional family of lines] Consider the pencil of lines in $\C^2$ passing through a given special point $P$, parameterized by $V \iso \P^1$; for example take $P = (-1, -2) \in \C^2$.  Because we can connect any special point in $\C^2$ to $P$ with a line, there are infinitely many lines in this family containing at least 2 special points, though $\dim V = 1$.  
\end{example}

Less obvious is the fact that the general bound on the number of special points in a line, for the family of lines in Example \ref{pencil}, is also 2.  That is, there are at most finitely many lines in $\C^2$ through the special point $P$ containing more than 2 distinct special points of $\C^2$.  On the other hand, if we consider the pencil of lines in $\C^2$ passing through a non-special point such as $P = (1,1)$, then all but finitely many lines in the family will have at most 1 special point.  These facts are contained in the following proposition:

%We know from Theorems \ref{optimal} and \ref{sharp} that the sharp bound on the number of special points in a general line in $\C^2$ is 2.  The proofs of Theorems \ref{optimal} and \ref{sharp} told us that the only special subvarieties of $\C^6$ that lie in the image of $\cC_{X_1}^3$ and dominate $X_1$ are the diagonals $\Delta_{1,3}$, $\Delta_{1,5}$ and $\Delta_{3,5}$.  But Theorem \ref{optimal} leaves open the possibility that there are finitely many curves in $X_1$ defining families where infinitely many lines contain $\geq 3$ special points.  For example, we know that the families of horizontal and vertical lines are among these.   However, we will now prove that the horizontal and vertical lines are the only exceptional families where infinitely many lines contain $\geq 3$ special points.  

\begin{prop} \label{prop M lines}
Let $V \subset X_1$ be an irreducible curve in the Chow variety of lines in $\C^2$, not consisting exclusively of vertical or horizontal lines.  Then, outside of finitely many parameters $\lambda \in V$, the lines of the family will intersect at most 1 special point in $\C^2$, unless $V$ is
\begin{enumerate}
\item the family of all lines through a special point $P = (p_1, p_2) \in \C^2$; 
\item the family of lines defined by $L_\lambda = \{(x,y) \in \C^2: x + y = \lambda\}$, for $\lambda \in \C$; 
\item the family of lines $L_\lambda$ containing $(c_1, \lambda)$ and $(\lambda, c_2)$ for special parameters $c_1 \not= c_2$, for $\lambda \in \C$.
\end{enumerate}
In each of these 3 cases, outside of a finite set of parameters $\lambda \in V$, there will be at most 2 special points on the line $L_\lambda$; moreover, there are infinitely many parameters $\lambda \in V$ for which the line $L_\lambda$ contains exactly 2 special points of $\C^2$.
\end{prop}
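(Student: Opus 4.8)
The plan is as follows. Write $\cC\to V$ for the tautological family of lines over $V$. Since $V\subset X_1$, the induced map to the Chow variety is the inclusion, so the family is maximally varying and Proposition \ref{dominant} applies: $\rho_1\colon\cC\to\C^2$ is dominant. Two things must be proved: \textbf{(I)} if $V$ is none of the three listed families, only finitely many $L_\lambda$ carry $\ge 2$ special points; and \textbf{(II)} in each of the three cases all but finitely many lines carry $\le 2$ special points, while infinitely many carry exactly $2$.

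For (I), suppose infinitely many $L_\lambda$ carry two distinct special points; choosing two on each such line and collecting the resulting triples $(\lambda,z_1,z_2)$ in $\cC_V^2$, take Zariski closures and apply Theorem \ref{M^n} with $n=4$ to the off-diagonal special points of $\C^4$ so produced. This extracts an irreducible special subvariety $Z\subset\C^4$ which (a) lies in the closure of $\rho_2(\cC_V^2)$, (b) is not contained in the big diagonal $\{(w,w):w\in\C^2\}$, and (c) admits an irreducible $\tilde Z_0\subseteq\rho_2^{-1}(Z)$ dominating $V$ whose generic point consists of two distinct points. By Proposition \ref{not special M} with $\ell=1$ (using that $V$ is not a family of vertical or horizontal lines) the closure of $\rho_2(\cC_V^2)$ is not special, so $Z$ is a proper subvariety of it and $1\le\dim Z\le 2$; and since $\tilde Z_0$ dominates the curve $V$, a general fibre of $\tilde Z_0\to V$ -- hence some component of $Z\cap(L_\lambda\times L_\lambda)$ with a two-distinct-point generic member -- has dimension $\ge\dim Z-1$. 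Combined with the explicit list of special subvarieties of $\C^4$ from Theorem \ref{M^n}, this leaves finitely many shapes for $Z$, and running through them each either forces every $L_\lambda$ to be vertical or horizontal (contrary to hypothesis) or pins $V$ to one of the three families: all lines through a fixed special point (case (1)), the lines $\{x+y=\lambda\}$ (case (2)), or the lines through $(c_1,\lambda)$ and $(\lambda,c_2)$ for fixed special $c_1\ne c_2$ (case (3)). The recurring mechanism is that a linear relation cutting out $Z$, read along a general member of the family, either pins a coordinate pair of $L_\lambda$ to a single point (losing a dimension), or forces $L_\lambda$ to pass through a fixed point independently of $\lambda$, or forces slope $-1$, or relates the crossings of $L_\lambda$ with a fixed vertical and a fixed horizontal line.

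For the lower bound in (II) one exhibits the infinite families directly. In case (1), for each special $Q\ne P$ the line $\overline{PQ}$ lies in $V$ and carries the two special points $P,Q$, and distinct $Q$ give distinct lines. In case (2), if $a\ne b$ are PCF parameters then $(a,b)$ and $(b,a)$ are both special and lie on $\{x+y=a+b\}$, so varying $b$ over PCF parameters gives infinitely many lines with two special points. In case (3), whenever $f_\lambda$ is PCF the points $(c_1,\lambda)$ and $(\lambda,c_2)$ are both special (all of $f_{c_1},f_{c_2},f_\lambda$ being PCF), lie on $L_\lambda$, and are distinct since $c_1\ne c_2$; there are infinitely many such $\lambda$.

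It remains to show that in each case all but finitely many lines carry at most $2$ special points. Suppose instead that in one of the three cases infinitely many $L_\lambda$ carry three distinct special points. The same extraction, now applied to $\rho_3\colon\cC_V^3\to\C^6$, produces an irreducible special $Z\subset\C^6$ contained in the closure of $\rho_3(\cC_V^3)$, not contained in any diagonal $\Delta_{ij}$ (pairs $1\le i<j\le 3$ of the three points), and realizable in the sense of (c) above. One checks the closure of $\rho_3(\cC_V^3)$ is not special: all fifteen coordinate-pair projections $\cC_V^3\to\C^2$ are dominant, and the only special subvariety of $\C^6$ with this property is $\C^6$ itself, excluded on dimension grounds; hence $\dim Z\le 3$, and the dominance over $V$ gives, for general $\lambda$, a component of $Z\cap(L_\lambda)^3$ of dimension $\ge\dim Z-1$ with a three-distinct-point generic member. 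When $\dim Z\in\{2,3\}$ this is extremely restrictive -- a special plane or $3$-plane meets a general $(L_\lambda)^3$ in the expected negative or zero dimension unless in very special position -- and it kills all candidates. The delicate case is $\dim Z=1$, where the inequality degenerates to the mere non-emptiness of $Z\cap(L_\lambda)^3$ for general $\lambda$ and one must argue structurally: writing $t$ for the single free coordinate of $Z$ and tracking how its fixed-coordinate and equal-coordinate relations interact with a general member of the family, one finds any such $Z$ again forces an excluded outcome -- every line of the family vertical or horizontal, or a coincidence landing $Z$ inside some $\Delta_{ij}$, or the failure of $\rho_3^{-1}(Z)$ to dominate $V$ with distinct triples. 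I expect this $\dim Z=1$ bookkeeping, carried out in each of the three families, to be the main technical obstacle; the rest is a direct reprise in $\C^4$ and $\C^6$ of the Section \ref{upper} strategy.
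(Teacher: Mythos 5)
Your part (I) follows the same route as the paper: pass to $\rho_2\colon \cC_V^2\to\C^4$, use Theorem \ref{M^n} to extract a special subvariety $Z$ in the closure of $\rho_2(\cC_V^2)$, not inside the diagonal $\Delta$, whose $\rho_2$-preimage dominates $V$, and then classify the possibilities. The paper actually carries out this classification case by case (its cases (S1)--(S4) for special surfaces and (C1)--(C4) for special curves), and that enumeration is precisely where the three exceptional families (pencil through a special point, $x+y=\lambda$, and the lines through $(c_1,\lambda)$ and $(\lambda,c_2)$) emerge; you assert that ``running through them'' pins $V$ to these families but do not exhibit the run-through. Since the whole content of the proposition is that list, this part of your write-up is a sketch of the paper's proof rather than a complete alternative, though the mechanisms you name (a relation pinning a coordinate pair, forcing a persistent point, forcing slope $-1$, or tying the crossings with a fixed vertical and horizontal line) are the correct ones.

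The genuine gap is in the bound of $2$ for the three exceptional families. You propose to redo the extraction in $\rho_3\colon\cC_V^3\to\C^6$ and classify special subvarieties of $\C^6$ inside the closure of $\rho_3(\cC_V^3)$, and you explicitly leave the decisive step unfinished: the $\dim Z=1$ ``bookkeeping'' is deferred as ``the main technical obstacle,'' and the $\dim Z\in\{2,3\}$ cases are dismissed as ``extremely restrictive'' without checking any of the (many) candidates against the constraint $\dim\bigl(Z\cap(L_\lambda)^3\bigr)\ge\dim Z-1$ in each of the three families. As it stands this half is not a proof. The paper avoids $\C^6$ entirely: having completed the (S)/(C) classification in $\C^4$, it supposes infinitely many lines of the exceptional family carry three distinct special points and then, on each such line, \emph{chooses} the pair of special points so as to avoid the already-identified special components --- a pair with neither point equal to $P$ in case (1), a non-symmetric pair in case (2), a pair different from $\{(c_1,\lambda),(\lambda,c_2)\}$ in case (3). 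The Zariski closure of these pairs is then a special component of $\rho_2(\cC_V^2)$ lying in none of $\Delta$, $S_{P,i}$, $D$, $C_{c_1,c_2,i}$, contradicting the classification already established. If you want to salvage your $\C^6$ route you would have to supply the full enumeration there; the simpler fix is to reuse your own $\C^4$ classification with this choice-of-pairs trick.
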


\begin{proof}
Let $V \subset X_1$ be an irreducible algebraic curve defined over $\C$, not consisting of vertical or horizontal lines.  Let $\cC \to V$ denote this family of lines over $V$.  Consider the special points in $\rho_2(\cC_V^2)$ in $\C^4$, for the map $\rho_2$ defined in \S\ref{max section}.  Note that their Zariski closure must contain the special diagonal surface
	$$\Delta_{1,3} = \{(x_1, x_2, x_3, x_4)\in \mathbb{C}^4:  (x_1,x_2) = (x_3, x_4)\},$$
because $\rho_1$ is dominant to $\C^2$ (in which special points are Zariski dense).  The general bound on the number of special points on lines $L \in V$ will be 1 unless either 
\begin{enumerate}
\item[(S)] there is a special surface contained in the Zariski closure $\overline{\rho_2(\cC_V^2)}$, other than the diagonal $\Delta_{1,3}$, intersecting $L \times L$ in a curve for general $L \in V$; or 
\item[(C)]  there is a special curve contained in the Zariski closure $\overline{\rho_2(\cC_V^2)}$, not contained in $\Delta_{1,3}$, intersecting $L \times L$ in a nonempty finite set for general $L \in V$.
\end{enumerate}
Indeed, if an infinite collection of lines $L$ in $V$ had at least 2 special points, then the Zariski closure of those pairs of points would not be contained in $\Delta_{1,3}$ and would form a special subvariety lying in $\overline{\rho_2(\cC_V^2)}$.   Some irreducible component $Z$ of this Zariski closure will be positive dimensional (because it contains an infinite collection of points), and it must have dimension $< \dim \overline{\rho_2(\cC_V^2)} = 3$ because the hypersurface $\overline{\rho_2(\cC_V^2)}$ in $\C^4$ cannot be special by Proposition \ref{not special M}.  Thus, $Z$ is either a curve or a surface.  Because the collection of lines $L$ containing these special points was infinite in the curve $V$, some component $Z$ must have preimage $\rho_2^{-1}(Z)$ that projects dominantly to $V$.  The dimension of the intersection of $Z$ with the general $L\times L$, as described in cases (S) and (C), then follows by dimension count, exactly as in \S\ref{proof of optimal}.

We will see that cases (1) and (2) of the proposition correspond to the existence of special surfaces of type (S), and case (3) of the proposition gives rise to special curves of type (C).  For each of the families (1), (2), and (3), it is clear that there are infinitely many lines in the family containing at least 2 distinct special points.  It will remain to show that there are at most 2 special points on all but finitely many lines in each of these families.

We work case by case, considering each type of special surface or curve in $\C^4$:

\begin{itemize}
\item[(S1)] $\{x \in \C^4: x_i=c_i \mbox{ and } x_j =  c_j\}$ for $i< j$ in $\{1, 2, 3, 4\}$:  If $\{i,j\}$ is $\{1,2\}$ or $\{3,4\}$, then this special surface is contained in $\overline{\rho_2(\cC_V^2)}$ if and only if $V$ is the pencil of lines through the special point $P = (c_i, c_j)$, and we denote these surfaces by  
	$$S_{P,1} := \{P\} \times \C^2 \; \mbox{ and } \; S_{P,2} := \C^2 \times \{P\}.$$
If $\{i,j\}$ is $\{1,3\}$, $\{2,4\}$, $\{1,4\}$, or $\{2,3\}$, then the intersection with $L \times L$ is finite for general $L \in V$ so this surface cannot be of type (S).

\item[(S2)] $\{x \in \C^4: x_i = c_i \mbox{ and } x_j = x_k\}$ for three distinct indices $i,j,k$:    If $\{j,k\}$ is $\{1,2\}$ or $\{3,4\}$, then the surface is not contained in $\overline{\rho_2(\cC_V^2)}$ because the intersections of $L$ with $\{x=y\}$ and $\{x= c_i\}$ are finite for general $L \in V$.  If $\{i,j\} = \{1,2\}$, the intersection of the special surface with $L \times L$ is again generally finite; other cases are similar, and none can be of type (S).

\item[(S3)] $\{x_i = x_j \mbox{ and } x_k = x_m\}$ with disjoint pairs of indices $\{i,j\}$ and $\{k, m\}$ in $\{1,2,3,4\}$:  If the pairs are $\{1,2\}$ and $\{3,4\}$, then the intersection with $L  \times L$ is finite for general $L \in V$, so it cannot be of type (S).  If the pairs are $\{1,3\}$ and $\{2,4\}$, then the special subvariety is the special diagonal surface
	$$\Delta_{1,3} = \{(x_1,x_2) = (x_3, x_4)\}.$$
If they are $\{1,4\}$ and $\{2,3\}$, then the the surface lies in $\overline{\rho_2(\cC_V^2)}$ if and only if the points on $L$ come in symmetric pairs (so $(x,y) \in L$ $\iff$ $(y,x) \in L$) for general $L \in V$.  In other words, the family of lines is of the form 
	$$x + y = b$$
for a nonconstant function $b$ on $V$, and we will denote this special surface by
	$$D := \{x_1 = x_4 \mbox{ and } x_2 = x_3\}.$$
For this family of lines, a point $(x,y)$ on the line will be special if and only if $(y,x)$ is special, so there are infinitely many such lines with at least two distinct special points.

\item[(S4)] $\{x_i = x_j = x_k\}$ for three distinct indices $i, j, k$:   There is at least one pair of indices which is either $\{1,2\}$ or $\{3,4\}$.  But then the intersection with a $L \times L$ is finite for general $L \in V$, so this surface cannot be of type (S).
\end{itemize}

We now consider the existence of special curves of type (C).  We work case by case again, considering each type of special curve in $\C^4$. 
 
\begin{itemize}
\item[(C1)] $\{x \in \C^4: x_i=c_i,  \; x_j =  c_j, \; x_k = c_k\}$ for $i< j < k$ in $\{1, 2, 3, 4\}$:  There is a pair of indices equal to $\{1,2\}$ or $\{3,4\}$.  So the product $L \times L$ intersects this curve for a general $L \in V$ if and only if the family of lines persistently contains a special point $P$.  In other words, the family must be case (1) of the proposition, and the Zariski closure $\overline{\rho_2(\cC_V^2)}$ also contains the surfaces $S_{P,1}$ and $S_{P,2}$ of type (S1). 

\item[(C2)] $\{x_i=c_i,  \; x_j =  c_j, \; x_k = x_m\}$ for disjoint pairs $\{i,j\}$ and $\{k,m\}$:  If $\{i,j\} = \{1,2\}$ then the general intersection with $L \times L$ is empty unless the lines contain $P = (c_1, c_2)$ for all $L \in V$.  In particular, this family of lines must be case (1) of the proposition, and the Zariski closure $\overline{\rho_2(\cC_V^2)}$ also contains the surfaces $S_{P,1}$ and $S_{P,2}$ of type (S1). Similarly for $\{i,j\} = \{3,4\}$. If $\{i,j\} = \{1,3\}$, then the equality $x_2 = x_4$ in $L\times L$ would imply that $c_1 = c_3$ because $L$ is degree 1 and not horizontal for all $L$, making this special curve lie in the diagonal $\Delta_{1,3}$.  Similarly for $\{i,j\} = \{2,4\}$.  So, for each of these cases, the curve cannot be of type (C).  

For $\{i,j\} = \{1,4\}$ with $c_1 = c_4$, the relation $x_2 = x_3$ implies that the general line in the family must intersect points of the form $(c_1, y)$ and $(y, c_1)$ for some $y \in \C$ (where the $y$ value can vary with the line).  If $y = c_1$ for a general line, then the family of lines must be case (1) with $P = (c_1, c_1)$, and the Zariski closure $\overline{\rho_2(\cC_V^2)}$ also contains the surfaces $S_{P,1}$ and $S_{P,2}$ of type (S1).  If $y \not= c_1$ for a general line in the family, then the pair of points $(c_1, y)$ and $(y, c_1)$ determine the line uniquely; the family must be of the form $x+y=\lambda$, and the Zariski closure $\overline{\rho_2(\cC_V^2)}$ also contains the surface $D$ of type (S3).  For $\{i,j\} = \{1,4\}$ with $c_1 \not= c_4$, the relation $x_2 = x_3$ implies that the general line in the family must intersect points of the form $(c_1, y)$ and $(y, c_4)$ for some $y \in \C$ (where the $y$ value must vary with the line).  The family of lines is therefore case (3) of the proposition, and the Zariski closure $\overline{\rho_2(\cC_V^2)}$ will contain the curve of type (C) defined by
	$$C_{c_1,c_4, 1}  := \{x_1 = c_1 \mbox{ and } x_4 = c_4 \mbox{ and } x_2 = x_3\}.$$
By the symmetry of $\overline{\rho_2(\cC_V^2)}$, we see that the curve
	$$C_{c_1,c_4, 2}  = \{x_2 = c_4 \mbox{ and } x_3 = c_1 \mbox{ and } x_1 = x_4\}$$
will also be contained in $\overline{\rho_2(\cC_V^2)}$, for the same family of lines.   The case of $\{i,j\} = \{2,3\}$ leads to the same conclusion.

\item[(C3)]   $\{x_i=c_i,  x_j = x_k = x_m\}$:  Assume first that $i = 1$.  The relations determine a curve of type (C) if the lines of the family contain both $(c_1, y)$ and $(y,y)$ for some $y \in \C$ (where the value of $y$ can vary with the line).  If the general line intersects the point $P = (c_1, c_1)$, then the family is case (1), and the Zariski closure $\overline{\rho_2(\cC_V^2)}$ also contains the surfaces $S_{P,1}$ and $S_{P,2}$ of type (S1).  If $y \not= c_1$ for a general line, then the lines must be horizontal, and this case has been ruled out by assumption. Similarly for $i = 2, 3, 4$.  

\item[(C4)]  $\{x_1 = x_2= x_3 = x_4\}$:  This curve is contained in the diagonal surface $\Delta_{1,3}$.   
\end{itemize}

The case-by-case analysis shows that the three types of families of lines listed in the proposition are the only families of lines that give rise to special subvarieties of type (S) or type (C). Thus, any other 1-parameter maximally-varying family of lines will have at most 1 special point on the general line in the family.  

To see that the bound is at most 2 on the three types of exceptions (1), (2), and (3), we look again at the cases and the structure of the Zariski closure of the special points in $\rho_2(\cC_V^2)$.  Suppose we are in case (1), and assume there are at least 3 distinct special points on infinitely many lines $L$ in the family.  Then, considering pairs of special points on a line $L$, with neither equal to the given special $P \in \C^2$, we build a component of the Zariski closure of special points in $\rho_2(\cC_V^2)$ that is neither in $\Delta_{1,3}$ nor in the surfaces $S_{P,1}$ or $S_{P,2}$ of type (S1) above.  Similarly for case (2), choosing pairs of distinct special points that are not symmetric (as $(x,y)$ and $(y,x)$) leads to a component of the Zariski closure of special points in $\overline{\rho_2(\cC_V^2)}$ that is neither in $\Delta_{1,3}$ nor in the surface $D$ of type (S3).  And finally, for case (3), the existence of pairs of points that are distinct and not equal to the pair $(c_1, \lambda)$ and $(\lambda, c_4)$ as described in case (C2) leads to a special component not contained in $\Delta_{1,3}$ nor in the curves $C_{c_1, c_4, 1}$ or $C_{c_1, c_4, 2}$.  

Thus, it remains to observe that a family of lines $\cC\to V$ over a curve $V$ cannot be exhibited as a family of the form (1), (2), or (3) in two distinct ways.  For example, as there is a unique line through distinct points $P$ and $Q$ in $\C^2$, there cannot be a family of lines exhibited as case (1) of the proposition for two distinct special points $P \not= Q$.  It is also clear that there is no point $P \in \C^2$ in every line of the family of case (2), so cases (1) and (2) cannot coincide.  Given a family as in case (3), we can easily compute that there are at most two lines in the family containing any given point $P\in \C^2$, so cases (3) and (1) cannot coincide.   The family of lines in case (2) has constant slope, while those of case (3) have slopes varying with $\lambda$ because $c_1 \not=c_2$, so the cases (2) and (3) cannot coincide.  Finally, we check that a family of lines cannot be exhibited as case (3) for distinct special pairs $(c_1, c_2)$ and $(c_1', c_2')$ with $c_1 \not= c_2$ and $c_1' \not= c_2'$.  If so, there would be a quadratic relation that must be satisfied for all parameters $\lambda \in \C$, namely
	$$c_1 \lambda^2 - c_2 \lambda^2  - \lambda^2 c_1' + \lambda^2 c_2'  + 2 c_2 c_1' \lambda  - 2 c_1c_2' \lambda +  c_1^2 c_2'  - c_1^2 c_2 + c_1 c_2^2   - c_2^2 c_1' = 0,$$
which implies that $(c_1, c_2) = (c_1', c_2')$.  

This proves that the bound is at most 2 for each of these exceptional families.  To see that the bound is optimal, we observe that the families are constructed to have infinitely many lines containing at least 2 special points.  
\end{proof}

\begin{proof}[Proof of Theorem \ref{lines}]
The Chow variety of lines $X_1$ has dimension 2.  From Theorem \ref{optimal}, we know that there is a finite union $V_1$ of irreducible curves and points in $X_1$ so that there are at most 2 special points on each line $L \not\in V_1$.  Now fix an irreducible curve $C \subset V_1$, and assume it is not the family of vertical or horizontal lines in $\C^2$.  From Proposition \ref{prop M lines}, there is again a bound of 2 on the number of special points for all but finitely many $L \in C$.  This completes the proof.
\end{proof}

%%%%%%
\bigskip
\section{Real algebraic curves in $\R^2$}
\label{real lines}

In this section, we observe that Theorems \ref{uniformChow}, \ref{optimal}, \ref{sharp}, and \ref{lines} apply to real algebraic curves in $\R^2$ passing through PCF parameters in the Mandelbrot set, in particular providing a proof of Theorem \ref{real}.

Suppose $P(x,y) \in \R[x,y]$ is a polynomial with degree $d \geq 1$.  Writing $x = \frac12 (c + \bar{c})$ and $y = \frac{1}{2i}(c - \bar{c})$, we obtain a polynomial of $c$ and $\bar{c}$ of degree $d$ with complex coefficients.  In this way, any real algebraic curve in $\R^2$ passing through a collection $\{c_1, \ldots, c_m\}$ of PCF parameters in $\C$ gives rise to a complex algebraic curve in $\C^2$ passing through special points $\{(c_1, \bar{c}_1), \ldots, (c_m, \bar{c}_m)\}$.  (Recall that the set of special parameters is symmetric under complex conjugation.)  

For example, if we begin with the line in $\R^2$ defined by
	$$\left\{(x,y) \in \R^2:  a \, x + b\, y = r\right\}$$
with $a, b, r \in \R$, then this line contains $c \in \C$ if and only if the complex line 
	$$\left\{(x,y) \in \C^2:  \frac12 (a- i b) \, x + \frac12 (a + i b) \, y = r\right\}$$
contains the point $(c, \bar{c})$ in $\C^2$.  In particular, taking $b=1$ and $a=r = 0$ shows that the real axis in $\C$ corresponds to the diagonal line $x=y$ in $\C^2$.  Note that the vertical and horizontal lines in $\C^2$ cannot arise by this construction. 

\begin{example} The imaginary axis in $\C$ contains the three PCF parameters $\{i, 0, -i\}$ and corresponds to the complex line $y = -x$ in $\C^2$.  Other than the real and imaginary axes in $\C$, we do not know any examples of real lines with more than two PCF parameters.
\end{example}

We see immediately that Theorem \ref{uniformChow} applies to real algebraic curves in each degree $d\geq 1$, implying there is a uniform bound on the number of PCF parameters on any such curve, depending only on the degree.  And so does Theorem \ref{lines}, as the real axis in $\C$ is the only line containing infinitely many PCF parameters, implying that there are only finitely many real lines in $\C$ passing through more than 2 PCF parameters.  This completes the proof of Theorem \ref{real}.

Note that the set of all complex algebraic curves of degree $d$ built from real curves in the above way is Zariski-dense in the Chow variety $X_d$ of all complex curves of degree $\leq d$ in $\C^2$.  Therefore, Theorem \ref{optimal} also holds for real algebraic curves.  Finally, observing that there always exists a real algebraic curve of degree $d$ through any collection of $d(d+3)/2$ points in $\C$, we see that Theorem \ref{sharp} also holds by choosing the curves to pass through collections of PCF parameters, so the bound of $d(d+3)/2$ is optimal for a general real curve in degree $d$.  

\begin{example}  Holly Krieger pointed out to us that Theorem \ref{lines} also implies there are only finitely many horizontal real lines in $\C$ that contain more than 1 PCF parameter.  Indeed, if two distinct PCF parameters, say $c_1$ and $c_2$, have the same nonzero imaginary part, then $c_2 - c_1 = \bar{c}_2 - \bar{c}_1 \in \R$, and the four pairs $(c_1, \bar{c}_2)$, $(\bar{c}_2, c_1)$, $(c_2, \bar{c}_1)$, and $(\bar{c}_1, c_2)$ are on the complex line
	$$x + y = \alpha := c_1 + \bar{c}_2$$
in $\C^2$.  We do not know of any examples of horizontal lines, other than the real axis in $\C$, containing more than one PCF parameter.
\end{example}

%%%%%%%%%

\bigskip \bigskip
%\bibliographystyle{../tex/bib/math}
%\bibliography{../tex/bib/math}

\def\cprime{$'$}

\bigskip\bigskip

\end{document}